\def\Z{{\mathbb Z}}
\def\odd{{\rm odd}}
\def\sat{{\rm sat}}
\def\even{{\rm even}}
\def\ss{{\rm ss}}
\def\GL{{\rm GL}}
\def\Gal{{\rm Gal}}
\def\Jac{{\rm Jac}}
\def\rk{{\rm rk}}
\def\an{{\rm an}}
\def\P{{\mathbb P}}
\def\Inv{{\rm Inv}}
\def\res{{\rm res}}
\def\cris{{\rm cris}}
\def\R{{\mathbb R}}
\def\F{{\mathbb F}}
\def\Q{{\mathbb Q}}
\def\CE{{\mathcal E}}
\def\CF{{\mathcal F}}
\def\H{{\mathcal H}}
\def\C{{\mathcal C}}
\def\Z{{\mathbb Z}}
\def\P{{\mathbb P}}
\def\F{{\mathbb F}}
\def\Q{{\mathbb Q}}
\def\C{{\mathbb C}}
\def\H{{\mathcal H}}
\def\Sel{{\mathrm{Sel}}}
\def\Ql{{\Q_\ell}}
\def\Qp{{\Q_p}}
\def\Zp{{\Z_p}}
\def\Gal{{\mathrm{Gal}}}
\def\bQ{\bar\Q}
\def\grp{\mathfrak{p}}
\def\isoarrow{\stackrel{\sim}{\rightarrow}}
\def\Zl{{\Z_\ell}}
\def\ord{\mathrm{ord}}
\def\eps{\varepsilon}
\def\CE{\mathcal{E}}
\def\qed{{$\Box$ \vspace{2 ex}}}
\newtheorem{theorem}{Theorem}
\newtheorem{corollary}[theorem]{Corollary}
\newtheorem{lemma}[theorem]{Lemma}
\newtheorem{proposition}[theorem]{Proposition}
\newtheorem{remark}[theorem]{Remark}
\newenvironment{proof}{\noindent {\bf Proof:}}{$\Box$ \vspace{2 ex}}
\title{A positive proportion of elliptic curves over $\Q$ have rank one}
\author{Manjul Bhargava and Christopher Skinner}
\begin{document}
\maketitle

\begin{abstract}
  We prove that, when all elliptic curves over $\Q$ are ordered by
  naive height, a positive proportion have both algebraic and analytic
  rank one.  It follows that the average rank and the average analytic
  rank of elliptic curves are both strictly positive.
\end{abstract}

\tableofcontents 

\section{Introduction}

Any elliptic curve $E$ over $\Q$
has a unique Weierstrass model of the form
$E_{A,B}:y^2=x^3+Ax+B$ with $A,B \in \Z$ and such that for all primes
$\ell$:\, $\ell^6 \nmid B$ whenever $\ell^4 \mid A$. The (naive) {\it height}
of the elliptic curve
is then defined by
$$H(E)=H(E_{A,B}):= \max\{4|A^3|,27B^2\}.$$
When all elliptic curves
are ordered by their heights (or indeed, by their discriminants or
conductors), it is a well-known conjecture of Goldfeld~\cite{G1} and
of Katz and Sarnak~\cite{KS} that the average rank of all elliptic
curves is 1/2.  In fact, they conjectured that $50\%$ of all elliptic
curves should have rank 0 and $50\%$ should have rank 1, with a
neglible proportion having rank $\geq 2$.  However, as far as proofs,
it was not previously known whether a positive proportion of curves
have rank 0~or~1, whether the $\liminf$ of the average rank is $>0$,
or whether the $\limsup$ of the average rank is $<\infty$!  (See
\cite{BMSW} for a nice survey.)

In this direction, in recent papers~\cite{BS3,BS5} (using crucial
input also from the works \cite{DD} and~\cite{SU}) it was shown that,
when all elliptic curves are ordered by height, a positive proportion
have rank~0, and in fact analytic rank 0; moreover, the $\limsup$ of
the average rank of all elliptic curves is finite, and in fact less
than 1.

The purpose of this article is to prove the analogous positive proportion result for rank~1:

\begin{theorem}\label{rk1}
When all elliptic curves over $\Q$ are ordered by height, a positive proportion have rank~$1$.
\end{theorem}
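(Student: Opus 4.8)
The plan is to read off the Mordell--Weil rank of a positive proportion of curves from the size of a $p$-Selmer group, together with the $p$-parity theorem and a converse to the theorem of Gross--Zagier and Kolyvagin — rather than by exhibiting rational points directly. Fix a small prime $p$ for which the relevant $\GL_2$ Iwasawa main conjecture is available, say $p=5$. I would argue in four steps: (i) a positive proportion of $E/\Q$, ordered by height, have $\dim_{\F_p}\Sel_p(E)\le 2$ and global root number $-1$, forcing the $\Z_p$-corank of $\Sel_{p^\infty}(E)$ to equal $1$; (ii) after intersecting with a collection of further hypotheses on $E$, the $p$-converse to Gross--Zagier--Kolyvagin gives $\ord_{s=1}L(E,s)=1$; (iii) Gross--Zagier then produces a non-torsion point and Kolyvagin's Euler system bounds the rank, so $\rank E(\Q)=1$ and $\#\Sha(E/\Q)<\infty$; (iv) hence a positive proportion of $E$ have both algebraic and analytic rank one, which includes Theorem~\ref{rk1}, and averaging gives positivity of the average and average analytic ranks. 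Note the inequality $\rank E(\Q)\le\dim_{\F_p}\Sel_p(E)$ makes an upper bound on the rank cheap; all the substance lies in the lower bound, which is obtained analytically.

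For step (i) I would combine the Bhargava--Shankar theorem \cite{BS5} that the average of $\#\Sel_p(E)$ over all $E$ ordered by height is a fixed constant ($6$ for $p=5$) with the equidistribution of global root numbers, so that a positive proportion of $E$ (in fact, half) have root number $-1$. For such $E$ the $p$-parity theorem \cite{DD} makes the $\Z_p$-corank of $\Sel_{p^\infty}(E)$ odd, hence $\ge 1$; on the other hand, since a curve with $\dim_{\F_p}\Sel_p(E)\ge 2$ has $\#\Sel_p(E)\ge p^2$, the bound on the average of $\#\Sel_p(E)$ forces a positive proportion of the root-number-$(-1)$ curves to have $\dim_{\F_p}\Sel_p(E)\le 2$. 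For these, $\dim_{\F_p}\Sel_{p^\infty}(E)[p]=\dim_{\F_p}\Sel_p(E)\le 2$ (using trivial rational $p$-torsion, which holds generically), so the corank is at most $2$; being odd, it equals $1$. Thus a positive proportion of all $E$ have $\Z_p$-corank of $\Sel_{p^\infty}(E)$ equal to $1$.

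For step (ii) I would intersect this positive-density family with the set of $E$ satisfying the hypotheses of the converse theorem: $\bar\rho_{E,p}$ surjective and $E(\Q)$ of trivial torsion (each holding for $100\%$ of curves), together with good ordinary reduction at $p$ and a prime of multiplicative reduction with prescribed congruence behavior (each holding for a positive proportion). By the uniformity in the Bhargava--Shankar estimates these can all be imposed simultaneously alongside the corank-one condition, leaving a positive proportion. For $E$ in the resulting family, the $p$-converse to the theorem of Gross--Zagier and Kolyvagin (due to Skinner, resting on the one-divisibility of the $\GL_2$ main conjecture \cite{SU}, $p$-adic interpolation of $L$-values, and the non-triviality of Heegner classes) yields $\ord_{s=1}L(E,s)=1$; steps (iii) and (iv) then follow from Gross--Zagier, Kolyvagin, and a trivial averaging.

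I expect the main obstacle to be step (ii). The $p$-converse theorem is deep, and invoking it requires verifying that \emph{all} of its hypotheses hold simultaneously for a positive proportion of curves compatible with the Selmer constraint; moreover, if one uses the converse theorem in its Heegner-point formulation, one must choose an auxiliary imaginary quadratic field $K$ uniformly in $E$ so that the Heegner hypothesis holds for $E$ while the quadratic twist $E^K$ contributes nothing to the relevant Selmer group — this is where anticyclotomic Iwasawa theory and the $p$-indivisibility of Heegner points enter essentially. Two further points that need care are the equidistribution of root numbers in the height-ordered family (so that a positive proportion genuinely have sign $-1$) and the uniformity in the Bhargava--Shankar average-size theorems, which is what allows the various positive-density and density-one constraints to be combined without the intersection becoming negligible.
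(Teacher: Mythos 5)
Your high-level strategy --- small $p$ (here $p=5$), parity together with the Bhargava--Shankar average Selmer bound to locate a positive proportion of curves with $\Z_p$-corank of $\Sel_{5^\infty}(E)$ equal to $1$, and a $p$-converse to Gross--Zagier--Kolyvagin to upgrade that to analytic rank one and then to algebraic rank one --- is indeed the skeleton of the paper's argument. But one of the steps you treat as routine bookkeeping is in fact a genuine gap, and it is exactly where the paper's main new technical contribution sits.

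Step (ii) fails as written. Skinner's converse theorem requires (its hypothesis (V); condition (e) in the paper's Theorems~\ref{crit1} and~\ref{crit2}) a \emph{non-degeneracy of the localization map}: the image of $\Sel_p(E)$ in $E(\Qp)/pE(\Qp)$ must \emph{not} lie inside the image of $E(\Qp)[p]$, equivalently, the Selmer class must be non-trivial in $E(\Qp)\otimes\Qp/\Zp$. This is not a congruence condition on $(A,B)$ and cannot be ``imposed simultaneously'' by intersecting with further large families the way surjective $\bar\rho_{E,p}$, trivial torsion, good ordinary reduction at $p$, etc.\ can be --- it is a property of the Selmer class, not of $E$, and for a given $E$ with $\Sel_5(E)\cong\Z/5\Z$ it either holds or it does not. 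The paper resolves this by proving a new equidistribution theorem (Theorem~\ref{equi}, proved via the orbit-counting machinery of~\cite{BS5}): inside a sufficiently small $5$-adic disc of $(A,B)$ the quotients $E(\Q_5)/5E(\Q_5)$ are coherently identified, and the images of nonzero $5$-Selmer classes are equidistributed among them, which caps the proportion of ``bad'' curves (those whose unique nonzero Selmer class lands in the torsion image) at roughly $1/(p-1)$. Without this input you cannot apply the converse theorem to a positive proportion of curves, so your argument stalls at the step that carries all the substance. A secondary weak point is your appeal to equidistribution of global root numbers over the full height-ordered family; the paper deliberately avoids this by fixing a twist $D=-39$ once and for all, arranging $w(E)\,w(E^{-39})=-1$ for every $E$ in its family $\CF$, and then running the pair of criteria (Theorem~\ref{crit1} for $E$ and Theorem~\ref{crit2} for $E^{-39}$) so that whichever way the sign falls, one of the two curves is caught. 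That is a more robust maneuver than choosing $K$ per-curve and hoping the twist's Selmer group cooperates, and it sidesteps a root-number equidistribution statement that is not available off the shelf.
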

In other words, Theorem~\ref{rk1} states that a positive proportion of
elliptic curves over $\Q$ have infinitely many rational points.

\vspace{.01in}
In fact, we prove that a positive proportion of elliptic curves over $\Q$ have both algebraic and analytic rank~$1$. More precisely, for any elliptic curve $E$ over $\Q$,
let us use $\rk(E)$ and $\rk_\an(E)$ to denote the algebraic and analytic rank of $E$, respectively.
Then we prove
\begin{eqnarray}\label{precisethm}
{\liminf_{X\to\infty}}\,
\frac
{\#\{E :
\rk(E)=\rk_\an(E)=1\mbox{ and } H(E)<X \}}
{\#\{E : H(E)<X\}}&>&0.
\end{eqnarray}
In order to keep the arguments as transparent as possible, in this
paper we simply prove positivity in (\ref{precisethm}), and we have
not tried to maximize the lower bound on this limit obtainable via the
methods employed to prove Theorem~\ref{rk1}.

As a consequence we also obtain, for the first time, a positive lower
bound on the $\liminf$ of the average rank of elliptic curves:

\begin{corollary}\label{avgrk}
When all elliptic curves over $\Q$ are ordered by height, the average rank and the average analytic rank are both strictly positive.
\end{corollary}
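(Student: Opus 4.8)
The plan is to deduce Corollary~\ref{avgrk} as a formal consequence of the quantitative statement~(\ref{precisethm}). First I would record the elementary fact that both rank functions are non-negative on the entire family: $\rk(E)\geq 0$ is immediate, and $\rk_\an(E)\geq 0$ holds because, by the modularity theorem, $L(E,s)$ has an analytic continuation to all of $\C$, so its order of vanishing at the central point $s=1$ is a well-defined non-negative integer. In particular every curve contributes non-negatively to the relevant sums.

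Next, for the average algebraic rank, I would bound
$\sum_{H(E)<X}\rk(E)\geq \#\{E:\rk(E)=1,\ H(E)<X\}\geq \#\{E:\rk(E)=\rk_\an(E)=1,\ H(E)<X\}$,
the first inequality because curves of rank $0$ contribute $0$, curves of rank $1$ contribute exactly $1$, and curves of rank $\geq 2$ contribute a positive amount. Dividing by $\#\{E:H(E)<X\}$ and passing to the $\liminf$, the right-hand side is exactly the quantity proved to be positive in~(\ref{precisethm}), so the average rank is strictly positive. The identical argument, now bounding $\sum_{H(E)<X}\rk_\an(E)$ below by $\#\{E:\rk_\an(E)=1,\ H(E)<X\}$ and hence by the same subfamily (using $\rk_\an\geq 0$ on the complement), shows that the average analytic rank is strictly positive.

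There is essentially no obstacle: the corollary is purely formal given Theorem~\ref{rk1} in its quantitative form~(\ref{precisethm}) together with non-negativity of the two rank functions. The only point meriting a word of care is that the ``average (analytic) rank'' is defined via $\liminf$, since convergence of the average is not known; but the monotonicity argument above is insensitive to this and directly produces a positive lower bound for the $\liminf$, which is precisely what is asserted.
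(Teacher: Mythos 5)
Your proof is correct and is essentially the only available argument: the corollary is a formal consequence of the quantitative statement~(\ref{precisethm}) together with the non-negativity of $\rk$ and $\rk_\an$, and the paper treats it exactly as such (stating it as an immediate consequence without a separate proof). Your care in working with $\liminf$ and in noting that $\rk_\an\geq 0$ via modularity is exactly right.
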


Finally, we note that
Theorem \ref{rk1}
also implies, via rank 1 curves, that a positive proportion of
elliptic curves satisfy the Birch and Swinnerton-Dyer rank conjecture.
The corresponding result via rank~0 curves was proven in \cite{BS3}.

In the next section, we describe the method of proof, which uses in an
essential way a number of recent developments in the arithmetic of
elliptic curves: the Gross--Zagier formula (in the general form proved
by Yuan, Zhang, and Zhang~\cite{YZZ-GZ}); a $p$-adic variant of the
Gross--Zagier formula (due to Bertolini, Darmon, and Prasanna~\cite{BDP}
and Brooks~\cite{Hunter}); work on an Iwasawa Main Conjecture
for $\GL_2$ (by Wan~\cite{Wan-U31}, building on the earlier
work~\cite{SU}); a converse to a theorem of Gross, Zagier,
and Kolyvagin~\cite{Skinner-GZ}; and the determination of the average
orders of $p$-Selmer groups of elliptic curves (particularly for $p=5$)~\cite{BS5}.

\section{Method of proof}\label{method}

To prove Theorem~\ref{rk1} (and Corollary~\ref{avgrk}),
we first establish two convenient sets of $p$-adic criteria that can be used to deduce the existence of rank one curves.  The first of these results provides sufficient conditions for an elliptic curve $E$ to have (algebraic and analytic) rank~one; the second provides analogous criteria for the quadratic twist $E^D$ of $E$ by an imaginary quadratic field $\Q(\sqrt{D})$
(also called the $D$-twist of $E$) to have rank one.

\begin{theorem}\label{crit1}
Let $E/\Q$ be an elliptic curve and let $p\geq 5$ be a prime. Suppose that:
\begin{itemize}
\item[\rm (a)] the conductor $N_E$ of $E$ is squarefree and has at least two odd prime factors;
\item[\rm (b)] $E$ has good, ordinary reduction at $p$;
\item[\rm (c)] $E[p]$ is an irreducible $\Gal(\bQ/\Q)$-module;
\item[\rm (d)] $\Sel_p(E)\cong \Z/p\Z$;
\item[\rm (e)] the image of $\Sel_p(E)$ in $E(\Qp)/pE(\Qp)$ under the restriction map at $p$
is not contained in the image of $E(\Qp)[p]$.
\end{itemize}
Then the algebraic rank and analytic rank of $E$ are both equal to $1$.
\end{theorem}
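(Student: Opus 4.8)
The plan is to combine a $p$-adic converse to the theorem of Gross--Zagier--Kolyvagin with the Iwasawa Main Conjecture for $\GL_2$, using the Selmer-group hypotheses to pin down the analytic rank first, and then to deduce equality of the algebraic rank. Write $r = \rk_\an(E) = \ord_{s=1}L(E,s)$. Hypotheses (a)--(c) put us squarely in the setting where the machinery of Heegner points and Euler systems applies: (a) guarantees the existence of an imaginary quadratic field $K$ satisfying the Heegner hypothesis relative to $N_E$ (in fact with enough room to also control ramification at $p$), (b) gives the ordinarity needed for the $p$-adic $L$-function and the main conjecture, and (c) is the standard irreducibility assumption under which the Kolyvagin system argument and the main conjecture of Wan (building on \cite{SU}) are available. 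The first main step is to show $r \le 1$. Here I would invoke the converse theorem to Gross--Zagier--Kolyvagin (\cite{Skinner-GZ}): under (a)--(c), if $\rk(E) \le 1$ and $\Sha(E)[p^\infty]$ is finite then $r = \rk(E)$; but more to the point, the refined form of that circle of ideas shows that $\dim_{\F_p}\Sel_p(E) = 1$ together with the local condition (e) forces the Heegner point $y_K \in E(K)$ attached to a suitable $K$ to be of infinite order, hence (by Gross--Zagier, in the form of \cite{YZZ-GZ}) forces $L'(E/K,1) \ne 0$, i.e. $\ord_{s=1}L(E/K,s) = 1$, which gives $r \le 1$ after accounting for the sign of the functional equation over $\Q$.

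The role of hypothesis (e) is the crucial and most delicate point, and this is where I expect the main obstacle to lie. The one-dimensionality of $\Sel_p(E)$ (hypothesis (d)) is not by itself enough to conclude that the nonzero class comes from a Heegner point rather than from a nontrivial element of $\Sha(E)[p]$; the local condition (e) at $p$ is exactly what rules out the latter. Concretely, under (b) the local condition at $p$ defining $\Sel_p(E)$ is the image of the subgroup coming from the formal group / the ordinary filtration, and (e) says the global Selmer class restricts at $p$ to something lying \emph{outside} the (smaller) piece $E(\Qp)[p]$ maps onto; combined with the $p$-adic Gross--Zagier formula of Bertolini--Darmon--Prasanna \cite{BDP} and Brooks \cite{Hunter}, which relates the $p$-adic logarithm of a Heegner point to a value of the Bertolini--Darmon--Prasanna $p$-adic $L$-function, this non-vanishing of the local image is equivalent to the non-vanishing of that $p$-adic $L$-value, and hence to the Heegner point being non-torsion. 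Making this comparison precise --- matching the local condition (e) against the relevant $p$-adic $L$-value and invoking the right form of $p$-adic Gross--Zagier --- is the technical heart.

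It remains to show $\rk(E) = 1$ as well (both that $\rk(E) \ge 1$ and that $\Sha(E)[p^\infty]$ is finite, so that $r = \rk(E)$). For $\rk(E) \ge 1$: once the Heegner point is non-torsion, $E(K)$ has rank $\ge 1$; the sign considerations (the Heegner hypothesis forces the sign of $L(E/\Q,s)$ to be $-1$, so $E(\Q)$ is the relevant eigenspace) together with $r \le 1$ pin the rank of $E(\Q)$ to be exactly $1$. For the finiteness of $\Sha$ and the final equality of ranks I would feed the one-dimensionality of $\Sel_p(E)$ into the Iwasawa Main Conjecture for $\GL_2$ (Wan \cite{Wan-U31}, \cite{SU}): the main conjecture, under (a)--(c), relates the characteristic ideal of the relevant Selmer group to the $p$-adic $L$-function, and since $\Sel_p(E)$ has $\F_p$-dimension $1$ the $\Lambda$-module structure is as constrained as possible, which forces $\Sha(E)[p^\infty]$ to be finite (indeed trivial) and the $p$-part of the BSD formula to hold; this is exactly the kind of input used in \cite{BS3}. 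Combining: $\rk_\an(E) = \rk(E) = 1$. Throughout, the choice of the auxiliary imaginary quadratic field $K$ --- needed to satisfy the Heegner hypothesis at the primes dividing $N_E$, to be such that $p$ splits in $K$ (for \cite{BDP}), and to keep $E[p]$ suitably non-degenerate over $K$ --- must be made compatibly with (a)--(c); verifying that such a $K$ exists is routine given (a) and the squarefreeness of $N_E$, but should be recorded explicitly.
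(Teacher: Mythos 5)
Your high-level plan matches the paper's: reduce to the converse theorem of \cite{Skinner-GZ} over a suitably chosen imaginary quadratic field $K$, with (d) and (e) used to verify the Selmer-group hypothesis of that theorem, and the $p$-adic Gross--Zagier formula of \cite{BDP,Hunter} plus Wan's main-conjecture input doing the heavy lifting internally. You also correctly identify the role of (e) as forcing the local restriction at $p$ to be injective (in the paper's language, $H^1_f(\Q,V)\isoarrow H^1_f(\Qp,V)$, which is hypothesis (V) of \cite[Thm.~B]{Skinner-GZ}). That said, there are concrete gaps in the proposal.

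First, the descent from $K$ to $\Q$ is under-specified and, as written, would not close. You attempt to pin $\rk(E(\Q))=1$ by a sign/eigenspace argument on the Heegner point and to get $\rk_\an(E)=1$ ``after accounting for the sign of the functional equation over $\Q$,'' but nothing you have written tells you that the non-torsion is concentrated in the $+$-eigenspace, nor that $\ord_{s=1}L(E,s)=1$ rather than $\ord_{s=1}L(E^D,s)=1$. The paper resolves both issues at once by imposing the additional requirement $L(E^D,1)\neq 0$ on the auxiliary field $K$ (choosing the splitting/inertness pattern of the primes of $N_E$ so that $w(E^D)=+1$, and invoking Friedberg--Hoffstein \cite[Thm.~B]{FriedbergHoffstein} for the non-vanishing). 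Then $E^D(\Q)$ is finite by Kato/Kolyvagin, so $\rk(E(\Q))=\rk(E(K))=1$, and $L(E/K,s)=L(E,s)L(E^D,s)$ with $L(E^D,1)\neq 0$ forces $\ord_{s=1}L(E,s)=1$. You describe the choice of $K$ as ``routine'' and omit the non-vanishing requirement entirely; this is not routine and is the piece your argument actually needs.

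Second, you do not verify hypothesis (II) of \cite[Thm.~B]{Skinner-GZ} — that $E[p]$ is ramified at some odd prime $q\neq p$ that is inert in $K$. Condition (c) of Theorem~\ref{crit1} only gives irreducibility; ramification at some odd $q\neq p$ must be deduced, and the paper does this via Ribet's level-lowering (\cite{Ribet-Serre}): otherwise $E[p]$ would come from a weight-$2$ cusp form of level $1$, of which there are none. Without identifying such a $q$ you cannot arrange inertness in $K$, and the converse theorem is not applicable. Finally, your closing paragraph imports the Iwasawa Main Conjecture to get finiteness of $\Sha(E)[p^\infty]$ and the $p$-part of BSD as a separate step; this is unnecessary for the statement at hand (it is all internal to \cite[Thm.~B]{Skinner-GZ}), and the claim that dimension one of $\Sel_p(E)$ ``forces'' $\Sha(E)[p^\infty]$ to be trivial is not something you have justified and is not used in the paper's proof. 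The cleaner route is to get $\rk(E(K))=\ord_{s=1}L(E/K,s)=1$ directly from the converse theorem and then descend using $L(E^D,1)\neq 0$.
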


\vspace{0in}
\begin{theorem}\label{crit2}
Let $E/\Q$ be an elliptic curve and let $p\geq 5$ be a prime. Let $K/\Q$ be an imaginary quadratic
field of odd discriminant $D$ such that $2$ and $p$ split in $K$. Suppose that:
\begin{itemize}
\item[\rm (a)] the conductor $N_E$ of $E$ is squarefree with at least two odd prime factors,
 and $(N_E,D)=1$;
\item[\rm (b)] $E$ has good, ordinary reduction at $p$;
\item[\rm (c)] $E[p]$ is an irreducible $\Gal(\bQ/\Q)$-module and ramified at some odd prime
$q$ that is inert in $K$;
\item[\rm (d)] $\Sel_p(E)=0$ and $\Sel_p(E^D)\cong \Z/p\Z$;
\item[\rm (e)] The image of $\Sel_p(E^D)$ in $E^D(\Qp)/pE^D(\Qp)$ under the restriction map at $p$
is not contained in the image of $E^D(\Qp)[p]$.
\end{itemize}
Then the algebraic rank and analytic rank of $E^D$ are both equal to~$1$.
\end{theorem}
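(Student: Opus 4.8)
The plan is to reduce Theorem~\ref{crit2} to a $p$-converse theorem for $E$ over the field $K$ itself, using the quadratic-twist decompositions $E(K)\otimes\Q=(E(\Q)\otimes\Q)\oplus(E^D(\Q)\otimes\Q)$, $L(E/K,s)=L(E/\Q,s)\,L(E^D/\Q,s)$, and—since $p$ is odd—$\Sel_p(E/K)\cong\Sel_p(E/\Q)\oplus\Sel_p(E^D/\Q)$ into $\pm$-eigenspaces for $\Gal(K/\Q)$. By~(d) the $+$-summand of the last decomposition vanishes, so $\Sel_p(E/K)\cong\Z/p\Z$; and since $p=\grp\bar\grp$ splits in $K$, the localization of $\Sel_p(E/K)$ at $\grp$ is identified (via $E\cong E^D$ over $K_\grp=\Qp$) with that of $\Sel_p(E^D/\Q)$ at $p$, so~(e) becomes the statement that $\Sel_p(E/K)$ localizes nontrivially at $\grp$, outside the image of $E(K_\grp)[p]$. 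Hypotheses~(a)--(c) transcribe in the same spirit: by~(a) and $(N_E,D)=1$, $E/K$ has squarefree conductor $N_E\O_K$ whose inert part is divisible by the prime above $q$; by~(b), $E/K$ is good ordinary at $\grp$ and $\bar\grp$; and by~(c) together with irreducibility of $E[p]$ over $\Q$, the module $E[p]$ remains irreducible over $K$ (otherwise it would be induced from a character of $G_K$, hence semisimple on the inertia group at $q$, contradicting that $E$ has multiplicative reduction at $q$ with $E[p]$ ramified there).

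The first substantive step is to show $\mathrm{corank}_{\Z_p}\Sel_{p^\infty}(E/K)=1$. The upper bound $\le 1$ is immediate from $\dim_{\F_p}\Sel_p(E/K)=1$. For the lower bound, suppose the corank were $0$; then $E(K)$ and $\Sha(E/K)[p^\infty]$ are finite, irreducibility of $E[p]$ over $K$ gives $E(K)[p]=0$, hence $\Sel_p(E/K)\cong\Sha(E/K)[p]$, and the Cassels--Tate pairing on $\Sha(E/K)[p^\infty]$—non-degenerate and alternating for an elliptic curve—would force $\dim_{\F_p}\Sha(E/K)[p]$ to be even, contradicting $\dim_{\F_p}\Sel_p(E/K)=1$. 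Thus the corank is $1$, and by the parity theorem the functional equation of $L(E/K,s)$ has sign $-1$, so $N_E$ has an even number of primes inert in $K$ and the quaternion algebra over $\Q$ ramified exactly at those primes is indefinite, carrying a Shimura curve with its $K$-Heegner points.

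Next I would invoke the $p$-converse theorem for $E/K$ (the analogue over $K$ of the result of Skinner~\cite{Skinner-GZ}): under (a)--(c) and (e) as transcribed above, $\mathrm{corank}_{\Z_p}\Sel_{p^\infty}(E/K)=1$ forces $\ord_{s=1}L(E/K,s)=1$. The argument runs through the Iwasawa main conjecture for $E/K$—available in this generality via Wan~\cite{Wan-U31} (building on~\cite{SU}), whose hypotheses are supplied precisely by the squarefree level with at least two odd primes, the ramified inert prime above $q$, the large image of $E[p]$, and the splitting of $2$ and $p$—which identifies the characteristic ideal of the relevant Selmer module with the Bertolini--Darmon--Prasanna $p$-adic $L$-function~\cite{BDP,Hunter}; the $p$-adic Gross--Zagier formula of~\cite{BDP,Hunter} then identifies the leading term of that $p$-adic $L$-function with the $p$-adic logarithm of a $K$-Heegner point, and hypothesis~(e) is exactly what makes this logarithm nonzero, so the Heegner point is non-torsion and the Gross--Zagier formula of Yuan--Zhang--Zhang~\cite{YZZ-GZ} yields $\ord_{s=1}L(E/K,s)=1$. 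The theorem of Gross--Zagier--Kolyvagin over $K$ then gives $\rk(E/K)=1$ and $\#\Sha(E/K)<\infty$. To descend: $\Sel_p(E/\Q)=0$ forces $\rk(E/\Q)=0$, so the decomposition of $E(K)\otimes\Q$ gives $\rk(E^D/\Q)=1$; and $\rk_\an(E/\Q)\le\rk_\an(E/K)=1$, so by Gross--Zagier--Kolyvagin over $\Q$, $\rk_\an(E/\Q)=\rk(E/\Q)=0$, whence $\rk_\an(E^D/\Q)=1$.

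I expect the main obstacle to be the $p$-converse step in the \emph{generalized} Heegner setting: because $q\mid N_E$ is inert in $K$, the Gross--Zagier and $p$-adic Gross--Zagier inputs must be set up on a Shimura curve attached to a nonsplit quaternion algebra rather than on a classical modular curve, which requires the general form of the Yuan--Zhang--Zhang formula and the correspondingly general main conjecture of~\cite{Wan-U31}, together with a careful check that the transcriptions of (a)--(c),(e) verify all of their running hypotheses (squarefreeness of the level, the ramified inert prime, ordinarity and big image at $p$, and the precise local condition at $\grp$ feeding the Euler-system argument behind the converse theorem). A secondary point requiring care is tracking the local conditions at $p$ across the quadratic twist, so that~(e) for $E^D/\Q$ is literally the nonvanishing statement needed over $K$.
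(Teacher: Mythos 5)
Your proposal is correct and follows essentially the same route as the paper: verify hypotheses (I)--(V) of Theorem~B of \cite{Skinner-GZ} for the pair $(E,K)$ via the quadratic-twist decompositions of the Selmer/Bloch--Kato groups and $L$-function over $K$, invoke that theorem to get $\rk E(K)=\ord_{s=1}L(E/K,s)=1$, and descend to $E^D$. The extra steps you include (irreducibility of $E[p]$ over $K$, the Cassels--Tate corank bound, and the IMC/BDP/YZZ chain) are all correct but are already packaged inside \cite[Thm.~B]{Skinner-GZ}, which the paper cites as a black box after verifying (I)--(V); in particular the ``quaternionic'' concern you raise at the end is precisely what the generality of \cite{YZZ-GZ}, \cite{BDP,Hunter}, and \cite{Wan-U31} in that theorem is designed to handle, so it is not an open gap.
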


Both of these results will be proven in Section 3 using Theorem B of
\cite{Skinner-GZ}, which gives sufficient $p$-adic criteria for an
elliptic curve to have algebraic rank and analytic rank $1$ over an
imaginary quadratic field. The proof of \cite[Thm.~B]{Skinner-GZ} in
turn relies crucially on \cite{BDP}, \cite{Hunter}, and
\cite{Wan-U31}.

We employ Theorems \ref{crit1} and \ref{crit2} with $p=5$, $K=\Q[\sqrt{-39}]$ (so $D=-39$), $q=7$,
and $E$ belonging to a suitable ``large'' family of elliptic curves. (A large family of elliptic curves is one that is defined by congruence conditions and consists of a positive proportion of all elliptic curves; see Section~\ref{Equi} for a more precise definition.)  The large family we use is the set $\CF$ of elliptic curves $E=E_{A,B}$
such that
\begin{itemize}
\item $2^3\,||\,A$ and $2^4||B$;
\item $\Delta(A,B):=-4A^3-27B^2$ equals $2^8\Delta_1(A,B)$ with $\Delta_1(A,B)$
positive and squarefree (and necessarily odd), $(\Delta(A,B),5\cdot 39)=1$, and $\Delta(A,B)$ is a square modulo $39$;
\item $E$ has non-split multiplicative reduction at $7$;
\item $E$ has good, ordinary reduction at $5$.
\end{itemize}
For these curves, the discriminant of $E$ is $16\Delta(A,B)=2^{12}\Delta_1(A,B)$ and the conductor is
just $\Delta_1(A,B)$, which is squarefree and odd.

Since $7\mid \Delta_1(A,B)$ but $7$ is not a square modulo 39,
$\Delta_1(A,B)$ must be divisible by at least two odd primes. Hence
with our above choices of $p=5$ and $K=\Q(\sqrt{-39})$, conditions (a)
and (b) of Theorems~\ref{crit1} and \ref{crit2} hold for all
of
the curves in $\CF$.
Since all the curves $E$ in $\CF$ are semistable with non-split reduction at $7$,
$E[5]$ is an irreducible $\Gal(\bQ/\Q)$-module
and ramified at $7$, which is inert in $K=\Q[\sqrt{-39}]$; hence the conditions (c) of
Theorems \ref{crit1} and \ref{crit2} also hold, with $q=7$ in the case
of Theorem \ref{crit2}, for all curves in $\CF$.

To show that a positive proportion of elliptic curves also satisfy condition (d) of either
Theorem~\ref{crit1} or \ref{crit2}, we
use the following result on the average order of the $5$-Selmer group in large families of
elliptic curves, obtained in \cite{BS5}:

\begin{theorem}\label{thBS}{\bf (\cite[Thm.~31]{BS5})}
When elliptic curves $E$ over $\Q$ in any large family are ordered by height, the average order of the
$5$-Selmer group $\Sel_5(E)$ is equal to $6$.
\end{theorem}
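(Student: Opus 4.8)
\medskip
\noindent\textbf{Proof strategy.}\quad The plan is to run the orbit-counting method of Bhargava and Shankar for averaging Selmer group sizes, in the case $n=5$. The first ingredient is an arithmetic parametrization. Let $V$ be the space of $5\times 5$ skew-symmetric matrices of linear forms in five variables, regarded as a representation of a reductive group $G$ closely related to $\GL_5\times\GL_5$ (one factor changing coordinates on $\P^4$, the other acting on the five Pfaffian relations). The ring of polynomial invariants of $G$ on $V$ is generated by two elements $I$ and $J$, with $I^3$ and $J^2$ of equal degree, so the categorical quotient $V/\!\!/G$ is the space of elliptic curves $E_{I,J}\colon y^2=x^3+Ix+J$. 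Using Fisher's explicit theory of $5$-descent --- a $5$-covering torsor of $E$ is a genus-one curve embedded in $\P^4$ by a degree-$5$ line bundle and cut out by the $4\times 4$ sub-Pfaffians of such a matrix --- I would establish a bijection between the nonzero elements of $\Sel_5(E)$ and a natural class of $G(\Q)$-orbits on the fiber of $V/\!\!/G$ over $(I,J)$ that are everywhere locally soluble (over $\R$ and over every $\Q_p$), together with an integral refinement matching these Selmer elements, over all $E$, with $G(\Z)$-orbits on $V(\Z)$ of nonzero discriminant satisfying the corresponding local conditions --- up to the usual bookkeeping with stabilizers and a controlled finite set of primes.

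Given the parametrization, the second ingredient is a geometry-of-numbers count of the $G(\Z)$-orbits on $V(\Z)$ whose associated curve $E_{I,J}$ has height below $X$, restricted to the ``main body'' away from the cuspidal region near the reducible locus. I would carry this out by Bhargava's averaging method: fix a fundamental domain for $G(\Z)$ acting on $G(\R)$, translate an expanding region $R_X\subseteq V(\R)$ of admissible invariants across it, count lattice points, and average over translates; the main term is the volume of the resulting region, which a Jacobian change of variables into the invariant coordinates identifies as a constant multiple of $\#\{(I,J)\colon H(E_{I,J})<X\}\sim c\,X^{5/6}$. This matches the asymptotic $\#\{E\colon H(E)<X\}\sim c'X^{5/6}$, so the unsieved main term is already a bounded multiple of the number of curves.

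The third ingredient is the sieve. I would (i) discard reducible orbits --- those mapping to the identity class of $\Sel_5(E)$ or arising from degenerate matrices --- by confining them to the cusp and showing they number $o(X^{5/6})$; (ii) impose everywhere-local-solubility and the congruence conditions defining the given large family, feeding the resulting local densities through the count and multiplying them out; and (iii) prove a uniform tail bound on the number of $G(\Z)$-orbits of bounded height that are ``bad'' at a prime $p$ (non-minimal model, or $p^2$ dividing a discriminant), decaying in $p$ fast enough that the product of local densities converges and all error terms are summable. Inserting the relevant $p$-adic orbit densities --- the point at which genuinely new local computations enter for $n=5$ --- one finds that the average number of \emph{nonzero} elements of $\Sel_5(E)$ over the family equals $\sum_{1<d\mid 5}d=5$, whence the average of $\#\Sel_5(E)$ is $1+5=6$. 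The answer is independent of the large family because, at each place $v$, the ratio of the Selmer-orbit mass to the elliptic-curve mass is the same with or without imposed congruence conditions at $v$ --- a mass-formula phenomenon.

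The main difficulty lies in the second ingredient together with part (iii) of the third: correctly delineating and estimating the cuspidal part of a fundamental domain for the comparatively large group $G\sim\GL_5\times\GL_5$ on the $50$-dimensional space $V$, so as to guarantee that all but a negligible proportion of the bounded-height lattice points in the main body truly arise from Selmer elements --- rather than from degenerate or reducible configurations or from torsion artifacts --- and then establishing the uniformity-in-$p$ estimates needed to sieve down to an arbitrary large family. Granting those two analytic inputs, the remaining steps --- the $5$-descent parametrization, the volume computation, and the place-by-place density bookkeeping --- are, in structure, routine within this framework.
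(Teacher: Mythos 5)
Your sketch matches, at the level of structure, the strategy Bhargava and Shankar use in~\cite{BS5}: the representation $V=5\otimes\wedge^2 5$ of $(5\times5)$-skew-symmetric quintuples with $G\sim\GL_5\times\GL_5$, invariants of degrees $20$ and $30$ whose categorical quotient is the $(A,B)$-plane, the $4\times4$ sub-Pfaffian construction of the genus-one $5$-covering in $\P^4$ (Fisher), the bijection between nonidentity $5$-Selmer elements and locally soluble $G(\Q)$-orbits with an integral representative, the averaging/geometry-of-numbers count with a cusp estimate on the $50$-dimensional $V$, and the sieve to local densities whose product collapses to $\tau(G)=5$, giving $1+5=6$. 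Do note, though, that the present paper does \emph{not} prove this statement itself; it is imported verbatim as \cite[Thm.~31]{BS5} and only its proof strategy is recalled in \S4.1 (as input to Theorem~\ref{equi}), so there is no internal proof in this paper against which to weigh the details of your (correctly aimed) sketch.
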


Theorem \ref{thBS} guarantees the existence of many curves with 5-Selmer rank 0 or~1; however, this
alone is not sufficient to guarantee the existence of curves satisfying either of the conditions~(d)
in Theorems~\ref{crit1} and \ref{crit2}.   In order to deduce positive proportion statements for
rank 1 curves, we make use of information
regarding the distribution of the {\it parity} of
the $5$-Selmer ranks of
these curves and their $-39$-twists.  First, we note that the definition of $\CF$ implies that for all
$E\in\CF$, the curves $E$ and $E_{-39}$ have root numbers with opposite signs.
It follows that all curves $E\in\CF$ have the property that either $E$ or $E_{-39}$ have root number
$-1$; in particular, for all elliptic curves of height at most~$X$, either at least
$50\%$ of curves in $\CF$ have root number $-1$ or at least $50\%$ of their $-39$-twists have root
number $-1$.
The following theorem of Dokchitser--Dokchitser~\cite{DD}
(see also Nekov\'a\v{r}~\cite{Nekovar}) then allows us to relate these root numbers with $p$-Selmer ranks.

\begin{theorem}[Dokchitser--Dokchitser]\label{thDD}
Let $E$ be an elliptic curve over $\Q$ and let $p$ be any prime.
Let $s_p(E)$ and $t_p(E)$ denote the rank of the $p$-Selmer group of $E$ and the rank of $E(\Q)[p]$, respectively.
Then the quantity $r_p(E):=s_p(E)-t_p(E)$ is even $($resp.~odd$\,)$ if and only if the root
number of $E$ is~$+1$ $($resp.~$-1)$.
\end{theorem}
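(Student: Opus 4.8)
The assertion is exactly the $p$-parity conjecture for $E/\Q$, and the route I would take is that of Dokchitser--Dokchitser: reinterpret both sides and then exploit the self-duality of Selmer groups together with the calculus of Brauer relations. First I would pass from the left-hand side to a statement about $p^\infty$-Selmer coranks. Set $\rho_p(E):=\mathrm{cork}_{\Zp}\Sel_{p^\infty}(E/\Q)$. The descent sequence $0\to E(\Q)\otimes\Q_p/\Z_p\to\Sel_{p^\infty}(E/\Q)\to\Sha(E/\Q)[p^\infty]\to 0$ gives $\rho_p(E)=\rank E(\Q)+\mathrm{cork}_{\Zp}\Sha(E/\Q)[p^\infty]$, while the $p^n$-descent sequences give $s_p(E)-t_p(E)=\rank E(\Q)+\dim_{\F_p}\Sha(E/\Q)[p]$. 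The difference of these two quantities is the $\F_p$-dimension of the $p$-torsion in the quotient of $\Sha(E/\Q)[p^\infty]$ by its maximal divisible subgroup; since the Cassels--Tate pairing induces a perfect alternating form on this finite group when $p$ is odd (the prime $p=2$ requiring the usual extra care about the Poonen--Stoll obstruction to being alternating), that dimension is even. Hence $r_p(E)\equiv\rho_p(E)\pmod 2$, and since the global root number factors as $w(E)=\prod_v w_v(E/\Q_v)$ with each local factor explicitly computable, the theorem reduces to proving $(-1)^{\rho_p(E)}=\prod_v w_v(E/\Q_v)$.

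Next I would bring in self-duality and regulator constants. For a finite Galois extension $F/\Q$ with group $G$, Flach's generalization of the Cassels--Tate pairing makes $X_p(E/F):=\Hom_{\Zp}(\Sel_{p^\infty}(E/F),\Qp)$ a self-dual $\Q_p[G]$-representation with $\langle\Q_p[G/H],X_p(E/F)\rangle=\rho_p(E/F^{H})$ for each subgroup $H$. The engine is then the Dokchitser--Dokchitser theory of regulator constants: to a $\Z$-linear Brauer relation $\Theta=\sum_i n_i\,\Q_p[G/H_i]$ one attaches, for each self-dual irreducible $\tau$, a regulator constant $\mathcal C_\Theta(\tau)\in\Q^\times/(\Q^\times)^2$; multiplicativity of $\mathcal C_\Theta$ in $\tau$, together with its explicit evaluation on the local conditions that cut out $X_p(E/F)$, expresses the parity of a prescribed $\Z$-combination of the coranks $\rho_p(E/F^{H_i})$ as a product of local root numbers $w_v(E/\Q_v)$ over the places $v$ of $\Q$ ramified in $F$ (and the archimedean place). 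To isolate $\rho_p(E/\Q)$ itself one must choose $F$ cleverly: for instance an $S_3$- or dihedral extension built from an imaginary quadratic field $K$ (or from a layer of its anticyclotomic $\Z_p$-tower) in which $p$ and the primes of bad reduction of $E$ have prescribed splitting, chosen so that the trivial representation appears in a Brauer relation whose remaining constituents are controlled either by the already-known instances of $p$-parity or by cancellation. The basic known case one leans on is that of an elliptic curve admitting a rational $p$-isogeny $\phi$, for which $p$-parity is established directly by comparing the $\phi$- and $\hat\phi$-Selmer groups and matching the resulting discrepancy with a product of local terms, in the style of Cassels's work on $2$-descent.

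The step I expect to be the real obstacle is the local bookkeeping. One has to evaluate the regulator constants against the local Bloch--Kato (Selmer) conditions of $E$ at every prime of bad reduction and, above all, at $p$, where the computation genuinely depends on the reduction type of $E$ at $p$ (good ordinary, good supersingular, multiplicative, additive); simultaneously one must verify that the auxiliary field $K$ can be produced so as to satisfy all the imposed local conditions at once. This is where essentially all of the work sits, and where inputs such as the explicit formulas for local root numbers and for local Tamagawa-type factors are needed. (Nekov\'a\v{r}'s alternative argument, available when $E$ has ordinary reduction at $p$, instead deduces the parity of $\rho_p(E/\Q)$ from the sign in the conjectural functional equation of the $p$-adic $L$-function, via the formalism of Selmer complexes, thereby sidestepping the $p$-isogeny reduction but constraining the reduction type at $p$.)
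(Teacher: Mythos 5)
The paper does not actually prove this statement: Theorem~\ref{thDD} is quoted verbatim as a known theorem of Dokchitser--Dokchitser, with \cite{DD} (and \cite{Nekovar}) cited as the source, and no argument is given in the text. So there is no ``paper's own proof'' to compare against; the right benchmark is the argument in \cite{DD} itself.

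Measured against that, your outline is a faithful and essentially correct summary of the Dokchitser--Dokchitser strategy. The reduction from $s_p(E)-t_p(E)$ to the $p^\infty$-Selmer corank $\rho_p(E)$ via the Cassels--Tate pairing on the non-divisible part of $\Sha[p^\infty]$ is exactly right (including the remark on the Poonen--Stoll issue at $p=2$, which for elliptic curves turns out to be vacuous). The subsequent reduction to $(-1)^{\rho_p(E)}=\prod_v w_v(E/\Q_v)$, the introduction of the self-dual $\Q_p[G]$-representation attached to $\Sel_{p^\infty}(E/F)$, the regulator-constant/Brauer-relation formalism over a cleverly chosen $S_3$ or dihedral extension, the $p$-isogeny base case handled \`a la Cassels, and the correct identification of the local computations at $p$ and at bad primes as the technical heart of the matter --- all of this is the actual architecture of the cited proof, and you have also correctly flagged Nekov\'a\v{r}'s alternative via Selmer complexes and $p$-adic $L$-functions in the ordinary case. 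One small slip: you define $X_p(E/F)$ as $\Hom_{\Zp}(\Sel_{p^\infty}(E/F),\Qp)$, but $\Sel_{p^\infty}$ is $\Zp$-torsion so this group is zero; what is meant is the Pontryagin dual tensored with $\Qp$, i.e.\ $\Hom_{\Zp}(\Sel_{p^\infty}(E/F),\Qp/\Zp)\otimes_{\Zp}\Qp$. You are also right that what you have written is a road map rather than a proof --- the regulator-constant evaluations against the local conditions, and the construction of the auxiliary field with all the required local behaviour, are where the genuine work of \cite{DD} lies and are not reproduced here --- but as an account of what would need to be done, and how, it is accurate.
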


Let $\CF^D$ be the set of $D$-twists of curves in $\CF$ (recall that $D=-39$).
Since $E[5]$, and hence $E^D[5]$, is irreducible for all $E\in\CF$,
by Theorem~\ref{thDD}
we see that either at least half of the curves in $\CF$ of height at most~$X$
have odd $5$-Selmer rank, or at least half of the curves in $\CF^D$ of height at most $39^5 X$
have odd $5$-Selmer rank.  Of these odd $5$-Selmer rank curves in $\CF$ or $\CF^D$, not all could be
of $5$-Selmer rank $\geq 3$,
or the average size of the $5$-Selmer group of the curves in $\CF$ and $\CF^D$ could not be $6$;
indeed, at least a proportion of 19/20
of these odd $5$-Selmer rank curves must have 5-Selmer rank 1. This is enough to deduce that at least a proportion of 19/20 of the elliptic curves in $\CF$
also satisfy (d) for {\it either} Theorem~\ref{crit1} {\it or} Theorem~\ref{crit2}.

To show that a positive proportion of these latter curves also satisfy
the corresponding condition (e) of Theorem~\ref{crit1} or~\ref{crit2},
we need the following equidistribution result, whose proof is discussed in
Section~4:

\begin{theorem}\label{equi}
Let $F$ be a large family of elliptic curves over $\Q$, and let $E=E_{A_0,B_0}$ be any elliptic curve
in $F$. There exists a $5$-adic neighborhood $W\subset \Z_5^2\backslash\{\Delta=0\}$ of $(A_0,B_0)$
such that the large subfamily $F(W)$ of $F$ containing all the curves $E_{A,B}$ in $F$
with $(A,B)\in W$ has the property that:
\begin{itemize}
\item[\rm (i)]
For each $E'\in F(W)$,
$E'(\Q_5)/5E'(\Q_5)$ is naturally identified with $E(\Q_5)/5E(\Q_5)$, and
via this identification the image of the natural map $E'(\Q_5)[5]\rightarrow
E'(\Q_5)/5E'(\Q_5)$ is identified with that of $E(\Q_5)[5]\rightarrow E(\Q_5)/5E(\Q_5)$.

\vspace{.1in}

\item[\rm (ii)]
When the elliptic curves $E'\in F(W)$ are ordered by height,
the images
of the non-identity $5$-Selmer elements
under the natural restriction map
\begin{equation*}
\Sel_5(E')\stackrel{\res}{\to} E'(\Q_5)/5E'(\Q_5) = E(\Q_5)/5E(\Q_5)
\end{equation*}
are equidistributed in $E(\Q_5)/5E(\Q_5)$.
\end{itemize}
\end{theorem}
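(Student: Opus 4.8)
The plan is to prove the two parts of Theorem~\ref{equi} by quite different mechanisms: part~(i) is essentially a continuity/rigidity statement, while part~(ii) is the substantive equidistribution assertion and rests on a large-sieve input for the variation of $5$-Selmer elements. For part~(i), I would first recall that for $E=E_{A_0,B_0}$ with $(A_0,B_0)\notin\{\Delta=0\}$, the curve $E$ has good reduction at $5$ precisely when $5\nmid\Delta(A_0,B_0)$, and in general the formal group and the component group of $E/\Q_5$ depend only on the reduction type, which is locally constant in $(A,B)$ on $\Z_5^2\setminus\{\Delta=0\}$. I would therefore choose $W$ small enough that the Kodaira type, the exponent of the conductor at $5$, and the group of components are all constant on $W$; then one gets a canonical identification of $E'(\Q_5)$ with $E(\Q_5)$ as topological groups (via the parametrization by the formal group on the identity component together with the fixed finite component group), compatible with multiplication by $5$, and hence an identification of $E'(\Q_5)/5E'(\Q_5)$ with $E(\Q_5)/5E(\Q_5)$. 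The identification of the images of the $5$-torsion follows because $E'(\Q_5)[5]$ sits inside the identity component (as $5$ is prime to the order of the component group after shrinking $W$, or one simply tracks torsion directly) and the formal-group logarithm is an isometry matching $E'$ with $E$; so the image of $E'(\Q_5)[5]\to E'(\Q_5)/5E'(\Q_5)$ matches that of $E(\Q_5)[5]\to E(\Q_5)/5E(\Q_5)$. This is a finite check of locally constant data and I expect it to be routine.

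For part~(ii), the strategy is to realize $5$-Selmer elements, for curves in a large family, as integral orbits for the representation of $\mathrm{SL}_2$ (or the relevant group) on binary quartic forms --- the same parametrization used in the work on the average size of $\Sel_2$ and adapted to $\Sel_5$ in \cite{BS5} --- and then to count these orbits with a prescribed local condition at $5$ using the geometry-of-numbers/averaging methods already developed there. Concretely: fixing the $5$-adic neighborhood $W$ amounts to imposing a congruence condition at $5$ on the pair $(A,B)$; imposing in addition that the restriction at $5$ of a $5$-Selmer element lands in a prescribed coset of $E(\Q_5)/5E(\Q_5)$ is again a congruence-type (in fact, open $5$-adic) condition on the corresponding integral orbit. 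One then runs the orbit-counting asymptotics of \cite{BS5} with these extra conditions at $5$ inserted into the local volume factor. Since the counting method produces a main term that is a product of local densities over all places, the asymptotic count of $5$-Selmer elements whose restriction at $5$ lies in a given coset is proportional to the $5$-adic volume of that coset, which is the same for every coset (each nonzero coset, and the zero coset, has the appropriate volume, but we only count \emph{non-identity} elements and the identity contributes a lower-order/separately-handled term). Dividing the count in a given coset by the total count of non-identity $5$-Selmer elements over $F(W)$ then yields the uniform probability $1/\#\bigl(E(\Q_5)/5E(\Q_5)\bigr)$ in the limit, which is exactly the asserted equidistribution.

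The main obstacle is making the insertion of the local condition at $5$ \emph{uniform enough} to survive the limiting process: one must verify that the error terms in the orbit count of \cite{BS5} are uniform over the (finitely many, but one wants it cleanly) cosets, and --- more seriously --- that imposing a congruence at $5$ on $(A,B)$ simultaneously with a good-reduction (or fixed reduction type) hypothesis does not interact badly with the sieve that cuts out the large family $F(W)$. In other words, the delicate point is that $F(W)$ is itself defined by a condition at $5$, so one is counting orbits with a prescribed archimedean region, prescribed congruence conditions at all primes defining $F$, \emph{and} a prescribed $5$-adic open condition, and one needs the product-of-local-densities main term of \cite{BS5} to remain valid when two of these conditions both live at the prime $5$; this requires checking that the relevant $5$-adic set is still of the form to which the \cite{BS5} uniformity estimates apply (it is, being a bounded open subset of $\Z_5^2$ away from $\Delta=0$ with its fibered family of orbits), and that the ``tail'' estimates --- bounding orbits with large invariants or reducible/degenerate orbits, and the contribution of non-locally-soluble classes --- are unaffected by restricting to $W$. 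Once these uniformity points are granted (and they follow from the sieve and tail bounds already established in \cite{BS3,BS5}), the equidistribution is immediate from equality of local densities coset by coset. I would also remark that part~(i) is what makes the statement in~(ii) meaningful, since it is the identification $E'(\Q_5)/5E'(\Q_5)=E(\Q_5)/5E(\Q_5)$ that lets one speak of a single target space in which all the images equidistribute.
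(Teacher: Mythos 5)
Your overall strategy matches the paper's: part~(i) is a local constancy statement proved by shrinking the neighborhood until local data stabilize, and part~(ii) is obtained by inserting a $5$-adic condition into the local density at $5$ in the orbit-count asymptotics of \cite{BS5} and observing that the resulting main term is proportional to the $5$-adic measure of that condition. Two points, one minor and one substantive.

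The minor point: the relevant coregular representation for $5$-Selmer is not binary quartic forms under $\mathrm{SL}_2$ (that is the $2$-Selmer case) but rather $V=\Z^5\otimes\wedge^2\Z^5$, quintuples of $5\times 5$ skew-symmetric matrices under the group $G$ built from $\GL_5\times\GL_5$; you hedge with ``(or the relevant group)'' but the parametrization and covariants (the $4\times 4$ sub-Pfaffians cutting out a genus-one quintic in $\P^4$) are essential to making the coset condition concrete, so this should be named correctly.

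The substantive gap is in part~(ii). You say that ``imposing\ldots that the restriction at $5$\ldots lands in a prescribed coset\ldots is again a congruence-type (in fact, open $5$-adic) condition on the corresponding integral orbit,'' and you then worry about uniformity of error terms and interaction with the sieve. But the real crux is earlier and is not addressed: one must \emph{establish} that the set of soluble $v\in V(\Z_5)$ corresponding to a fixed coset of $E_{A,B}(\Q_5)/5E_{A,B}(\Q_5)$, as $(A,B)$ varies in a small $5$-adic disc $W$, is an open, $G(\Q_5)$-orbit-saturated subset $\Omega_i$ of $V(\Z_5)$, and that the $\Omega_i$ for distinct cosets have disjoint $G(\Q_5)$-orbits. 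This is Proposition~\ref{nbdexistence}, and its proof requires a genuine compactness argument for the group action: for stable $v$, the set $\{g\in G(\Q_5) : g\cdot v\in V(\Z_5)\}$ is compact and locally constant in $v$ (via the Cartan-type decomposition $G(\Z_5)T(\Q_5)G(\Z_5)$), which then lets one separate the $G(\Q_5)$-orbits of neighborhoods of the fibers over $(A_0,B_0)$. Without this you cannot assert that the coset condition descends to an acceptable congruence condition on $V(\Z)$ to which \cite[Theorem~29]{BS5} applies. Relatedly, you do not check that the identification of local groups produced in part~(i) (via closeness of Weierstrass data) coincides with the identification implicit in the orbit partition $\{\Omega_i\}$; the paper verifies this explicitly by choosing, for each $\Omega_i$, a point $P_i\in C(v_i)(\Q_5)$ and showing the images of the covariant map $\phi_v$ match under the part~(i) identification when $v$ is close to $v_i$. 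Both of these steps are needed for the argument to close, and neither is a routine uniformity estimate.

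A smaller remark on part~(i): the claim that the formal-group logarithm identifies the $5$-torsion is off the mark, since for good reduction over $\Q_5$ the formal group has no $5$-torsion; the identification of torsion images should go through reduction modulo $5^2$ or through the $5$-division polynomial together with Krasner's lemma, as the paper does. Your underlying intuition (the relevant data are locally constant) is correct, but the mechanism you name doesn't see the torsion at all.
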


We use Theorem~\ref{equi} to deduce that a
proportion of at least 1/2 of the elliptic curves in~$\CF$, which automatically satisfy (a)--(c),
also satisfy conditions (d) and (e) of either Theorem~\ref{crit1} or Theorem~\ref{crit2};
Theorem~\ref{rk1} then follows.
Details of this latter argument are discussed in Section 5.

We note that an analogue of the equidistribution result of
Theorem~\ref{equi} for elements of the 2-Selmer groups of Jacobians of odd
degree hyperelliptic curves $y^2=x^{2g+1}+\cdots$ of genus $g$ over
$\Q$ was established in \cite{BG}.  This equidistribution result was
used in the work of Poonen and Stoll~\cite{PS} to show that a positive
proportion of odd degree hyperelliptic curves of genus $g\geq 3$ have {\it
  no rational points} other than the obvious one at infinity.  It is
interesting that, in contrast, we use here the above equidistribution result
for 5-Selmer elements to prove the {\it existence of a non-trivial
  rational point} on a positive proportion of elliptic curves.

\section{$p$-adic criteria for an elliptic curve over $\Q$ to have rank one}

In this section, we prove Theorems 3 and 4, which provide criteria for
an elliptic curve $E$ or certain quadratic twists $E^D$ of $E$ to have
both algebraic and analytic rank 1; the criteria, in particular,
involve the $p$-Selmer groups of $E$ or $E^D$.  The proofs involve
showing that the hypotheses of these theorems satisfy those of Theorem
B of \cite{Skinner-GZ}.

\subsection{Selmer groups}\label{Selmer}

Let $E$ be an elliptic curve with conductor denoted $N_E$ and let $p$ be an odd prime.
Let $\bQ$ be an algebraic closure of $\Q$. For each prime $\ell$, let $\bQ_\ell$ be an algebraic closure of
$\Q_\ell$ and fix an embedding $\bQ\hookrightarrow \bQ_\ell$; the latter
realizes $G_{\Ql}=\Gal(\bQ_\ell/\Ql)$ as a decomposition subgroup for $\ell$ in $G_\Q=\Gal(\bQ/\Q)$.

Let $p$ be a prime. Recall that the $p^r$-Selmer group of $E$ is
$$
\Sel_{p^r}(E) = \ker\{H^1(\Q,E[p^r])\stackrel{\res}{\rightarrow}\prod_{\ell} H^1(\Ql,E(\bQ_\ell))\}
$$
and that the $p^\infty$-Selmer group of $E$ is
$$
\Sel_{p^\infty}(E) = \varinjlim_{r} \Sel_{p^r}(E) \subseteq H^1(\Q,E[p^\infty]).
$$
The local conditions on classes in $\Sel_{p^r}(E)$ (resp.~$\Sel_{p^\infty}(E)$), can also
be expressed as the restriction at each prime $\ell$ being
in the subgroup $E(\Ql)/p^rE(\Ql)\hookrightarrow H^1(\Ql,E[p^r])$ (resp.~$E(\Ql)\otimes\Qp/\Zp\hookrightarrow
H^1(\Ql,E[p^\infty]$), where the injection is just the usual Kummer map.

Let $T=T_pE$, $V=T\otimes_\Zp\Qp$, and $A=V/T=E[p^\infty]$ (the last identification being
given by $(x_n)\otimes\frac{1}{p^m}\mapsto x_m$).
The subgroup $E(\Ql)\otimes\Qp/\Zp\hookrightarrow H^1(\Ql,A)$ is also the image $H^1_f(\Ql,A)$
of $H^1_f(\Ql,V)$ in $H^1(\Ql,A)$, where
$H^1_f(\Qp,V)=\ker\{H^1(\Qp,V)\rightarrow H^1(\Qp,B_{\cris}\otimes_\Qp V)\}\cong\Qp$, where
$B_{\cris}$ is the ring of crystalline periods,
and $H^1_f(\Ql,V) = H^1(\F_\ell,V^{I_\ell})$ if $\ell\neq p$.
For $\ell\neq p$, $H^1_f(\Ql,A)$ (which is in fact $0$) is contained in $H^1(\F_\ell,A^{I_\ell})$ with
finite index in general
and with equality if $E$ has good reduction at $\ell$.
In particular, if $S$ is a finite set of primes containing all those that divide $pN_E$ and if $G_{\Q,S}$ is
the Galois group of the maximal extension of $\Q$ unramified outside $S$, then
$\Sel_{p^\infty}(E)\subset H^1(G_{\Q,S},A)$ consists of those classes with restriction to
$H^1(\Ql,A)$ belonging to $H^1_f(\Ql,A)$ for all $\ell\in S$.
As the image of $H^1(G_{\Q,S},V)$ in
$H^1(G_{\Q,S},A)$ is the maximal divisible subgroup (and has finite index), it follows that
the maximal divisible subgroup of $\Sel_{p^\infty}(E)$ is the image in
$H^1(\Q,A)$ of the characteristic zero Bloch--Kato Selmer group
$$
H^1_f(\Q,V) = \ker\{H^1(\Q,V)\stackrel{\res}{\rightarrow}\prod_\ell H^1(\Ql,V)/H^1_f(\Ql,V)\}.
$$
Also,
$$
\Sel_{p^\infty}(E)\twoheadrightarrow E(\Qp)\otimes\Qp/\Zp  \ \iff \
H^1_f(\Q,V)\twoheadrightarrow H^1_f(\Qp,V).
$$
Properties of the $p^\infty$-Selmer group $\Sel_{p^\infty}(E)$ and of
the Bloch--Kato Selmer group $H^1_f(\Q,V)$ can sometimes be deduced
from knowledge of just the $p$-Selmer group $\Sel_p(E)$, as we do in
the following lemma.

\begin{lemma}\label{Selmerlemma}
Suppose $E$ has good reduction at $p$ and $E(\Q)[p]=0$.
\begin{itemize}
\item[\rm (i)] If $\Sel_p(E)=0$, then $\Sel_{p^\infty}(E)=0$ and $H^1_f(\Q,V)=0$.
\item[\rm (ii)] The $\F_p$-dimension of $\Sel_p(E)$ is even $($resp.~odd$\,)$ if and only if
$w(E)=+1$ $($resp.~$w(E)=-1)$, where $w(E)$ is the root number of $E$.
\item[\rm (iii)] If $\Sel_p(E)\cong \Z/p\Z$ then $\Sel_{p^\infty}(E)\cong \Qp/\Zp$,
and if furthermore the image of $\Sel_p(E)$ in $E(\Qp)/pE(\Qp)$ is
not contained in the image of $E(\Qp)[p]$, then the restriction map
$\Sel_{p^\infty}\stackrel{\res}{\to} E(\Qp)\otimes\Qp/\Zp$ is an isomorphism; in particular,
$H^1_f(\Q,V)\isoarrow H^1_f(\Qp,V)$.
\end{itemize}
\end{lemma}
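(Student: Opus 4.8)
The plan is to deduce all three parts from properties of the $p^\infty$-Selmer group $\Sel_{p^\infty}(E)$ and the Bloch--Kato space $H^1_f(\Q,V)$, via two preliminary observations. First, since $E(\Q)[p]=0$ we have $E(\Q)[p^\infty]=H^0(\Q,A)=0$, so the long exact cohomology sequence of $0\to E[p]\to A\stackrel{p}{\to}A\to 0$ yields an isomorphism $H^1(\Q,E[p])\isoarrow H^1(\Q,A)[p]$; at each prime $\ell$ the kernel of $H^1(\Q_\ell,E[p])\to H^1(\Q_\ell,A)$ is the image of $A(\Q_\ell)/pA(\Q_\ell)$, which lies inside the Kummer subgroup $E(\Q_\ell)/pE(\Q_\ell)$, and a short check shows this isomorphism carries the local condition cutting out $\Sel_p(E)$ onto the one cutting out $\Sel_{p^\infty}(E)[p]$ at every $\ell$, so $\Sel_p(E)\cong\Sel_{p^\infty}(E)[p]$ compatibly with restriction at each prime. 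Second, since $p$ is odd the kernel of reduction $E(\Qp)\to\widetilde E(\F_p)$ is $p$-torsion-free, so $E(\Qp)[p^\infty]$ injects into $\widetilde E(\F_p)$ and hence, by the Hasse bound $\#\widetilde E(\F_p)<p^2$, is cyclic of order at most $p$; in particular $E(\Qp)[p^\infty]=E(\Qp)[p]$, and the image of $E(\Qp)[p]$ in $E(\Qp)/pE(\Qp)$ is exactly the kernel of the natural surjection $E(\Qp)/pE(\Qp)\twoheadrightarrow\bigl(E(\Qp)\otimes\Qp/\Zp\bigr)[p]=H^1_f(\Qp,A)[p]$. I will also use the standard identity $\dim_{\Qp}H^1_f(\Q,V)=\mathrm{corank}_{\Zp}\Sel_{p^\infty}(E)$, which follows from the discussion preceding the lemma (the image of $H^1_f(\Q,V)$ in $H^1(\Q,A)$ is the maximal divisible subgroup of $\Sel_{p^\infty}(E)$, and this map has $\Zp$-lattice kernel because $E(\Q)[p]=0$ forces $H^1(G_{\Q,S},T)$ to be $p$-torsion-free).

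Part (i) is then immediate: $\Sel_p(E)=0$ forces $\Sel_{p^\infty}(E)[p]=0$, a cofinitely generated $\Zp$-module with trivial $p$-torsion vanishes, so $\Sel_{p^\infty}(E)=0$, whence $\mathrm{corank}_{\Zp}\Sel_{p^\infty}(E)=0$ and $H^1_f(\Q,V)=0$. Part (ii) is Theorem~\ref{thDD} applied to $E$: as $E(\Q)[p]=0$ gives $t_p(E)=0$, we have $r_p(E)=s_p(E)=\dim_{\F_p}\Sel_p(E)$, which is therefore even (resp.\ odd) precisely when $w(E)=+1$ (resp.\ $-1$).

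For part (iii), the first observation gives $\Sel_{p^\infty}(E)[p]\cong\Z/p\Z$, so the cofinitely generated $\Zp$-module $\Sel_{p^\infty}(E)$ is isomorphic to $\Qp/\Zp$ or to some $\Z/p^n\Z$; the finite case cannot occur, since by (ii) $w(E)=-1$ (because $\dim_{\F_p}\Sel_p(E)$ is odd) and the $p^\infty$-parity theorem then forces $\mathrm{corank}_{\Zp}\Sel_{p^\infty}(E)$ to be odd, hence positive. Thus $\Sel_{p^\infty}(E)\cong\Qp/\Zp$, and consequently $\dim_{\Qp}H^1_f(\Q,V)=1$. Now assume that the image of a generator $c$ of $\Sel_p(E)$ under restriction at $p$ is not contained in the image of $E(\Qp)[p]$ in $E(\Qp)/pE(\Qp)$; by the second observation this is equivalent to saying that the image of $\res_p(c)$ under $E(\Qp)/pE(\Qp)\to H^1_f(\Qp,A)[p]$ is nonzero, and by the compatibility in the first observation this in turn says that $\res_p$ does not vanish on $\Sel_{p^\infty}(E)[p]$. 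The kernel of $\res_p\colon\Sel_{p^\infty}(E)\to E(\Qp)\otimes\Qp/\Zp$ is then a cofinitely generated $\Zp$-module with no $p$-torsion, hence $0$; since $\Sel_{p^\infty}(E)\cong\Qp/\Zp$ and $E(\Qp)\otimes\Qp/\Zp\cong\Qp/\Zp$ (as $E(\Qp)\cong\Zp\times(\text{finite})$), this injection is automatically surjective. In particular $\Sel_{p^\infty}(E)\twoheadrightarrow E(\Qp)\otimes\Qp/\Zp$, which by the equivalence recalled just before the lemma gives $H^1_f(\Q,V)\twoheadrightarrow H^1_f(\Qp,V)$; as both are one-dimensional $\Qp$-vector spaces, this surjection is the asserted isomorphism $H^1_f(\Q,V)\isoarrow H^1_f(\Qp,V)$.

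The one non-formal ingredient is the $p^\infty$-parity theorem invoked in (iii) to rule out a finite $\Sel_{p^\infty}(E)$; the remainder is diagram chasing, whose most delicate point is the second preliminary observation --- reconciling condition~(e), which is phrased through the map $E(\Qp)[p]\to E(\Qp)/pE(\Qp)$, with the non-vanishing of $\res_p$ on $\Sel_{p^\infty}(E)[p]$ under the identification of the first observation. It is exactly there that the elementary fact $E(\Qp)[p^2]=E(\Qp)[p]$ (valid since $p$ is odd and $E$ has good reduction at $p$) enters.
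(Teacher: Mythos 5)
Your proof is correct, and parts (i) and (ii) are handled essentially as in the paper (the paper likewise reduces (i) to the identification $\Sel_p(E)\cong\Sel_{p^\infty}(E)[p]$ and cites Dokchitser--Dokchitser for (ii)). The genuine divergence is in part (iii), where you invoke the $p^\infty$-parity theorem to rule out a finite $\Sel_{p^\infty}(E)$: from $w(E)=-1$ you conclude $\mathrm{corank}_{\Zp}\Sel_{p^\infty}(E)$ is odd and hence positive. The paper instead invokes Cassels' structure theorem, namely $\Sel_{p^\infty}(E)\cong(\Qp/\Zp)^r\oplus F\oplus F$ for a finite group $F$ (a consequence of the alternating Cassels--Tate pairing), which combined with $\Sel_{p^\infty}(E)[p]\cong\Z/p\Z$ forces $r+2\dim_{\F_p}F[p]=1$, i.e.\ $r=1$ and $F=0$, immediately. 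The paper's route is lighter and purely algebraic --- it needs no parity input at all for (iii), and in particular (iii) is proved independently of (ii) --- whereas your argument pulls in a deep arithmetic theorem and threads (iii) through (ii). (The two inputs are of course closely related: granting Cassels and $E(\Q)[p]=0$, the stated $p$-Selmer parity and the $p^\infty$-parity are equivalent; the paper is in effect choosing to use the piece of that package which avoids the deep arithmetic.) The remainder of your part (iii) --- using good reduction at $p$ and the Hasse bound to get $E(\Qp)[p^\infty]=E(\Qp)[p]$, identifying the image of $E(\Qp)[p]$ with the kernel of $E(\Qp)/pE(\Qp)\twoheadrightarrow(E(\Qp)\otimes\Qp/\Zp)[p]$, and then upgrading injectivity on $p$-torsion of a map of groups $\cong\Qp/\Zp$ to an isomorphism --- matches the paper's argument.
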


\begin{proof}
  Since $E(\Q)[p]=0$, $\Sel_{p}(E) = \Sel_{p^\infty}(E)[p]$. Part (i)
  is then immediate, and Part (ii) is a special case of the result
  of Dokchitser and Dokchitser stated in Theorem \ref{thDD}.

Cassels proved that $\Sel_{p^\infty}(E) \cong (\Qp/\Zp)^r \oplus F\oplus F$ for some finite group $F$,
so if $\Sel_{p^\infty}(E)[p]=\Sel_p(E)\cong\Zp/p\Zp$, then it follows that $r=1$ and $F=0$ and hence that
$\Sel_{p^\infty}(E)\cong \Qp/\Zp$.
As $E$ has good reduction at $p$, the reduction map induces an injection
$E(\Qp)[p^\infty]\hookrightarrow E(\F_p)[p^\infty]$. By the Riemann Hypothesis for $E$,
$p^2$ does not divide the order of $E(\F_p)$, so
$E(\Qp)[p]=E(\Qp)[p^\infty]$. From
the exact sequence
$$
0\rightarrow E(\Qp)[p^\infty]/pE(\Qp)[p^\infty] \rightarrow
E(\Qp)/pE(\Qp) \rightarrow (E(\Qp)\otimes\Qp/\Zp)[p]\rightarrow 0
$$
it then follows that if the image of the restriction map $\Sel_p(E)\rightarrow E(\Qp)/pE(\Qp)$ is
not contained in the image of $E(\Qp)[p]$ (= the image of $E(\Qp)[p^\infty]/pE(\Qp)[p^\infty]$), then
$\Sel_p(E)$ maps isomorphically onto $(E(\Qp)\otimes\Qp/\Zp)[p]\cong\Z/p\Z$.
That the restriction map $\Sel_{p^\infty}(E)\rightarrow E(\Qp)\otimes\Qp/\Zp \cong \Qp/\Zp$
is an isomorphism then follows from this injectivity at the level of $p$-torsion. Part (iii) follows.
\hfill\end{proof}

\subsection{Proofs of Theorems \ref{crit1} and \ref{crit2}}

Let $E$ be an elliptic curve over $\Q$. Let $K/\Q$ be an imaginary quadratic field
with discriminant denoted $D$, and suppose $p\geq 5$. Theorem B of \cite{Skinner-GZ}
asserts that if
\begin{itemize}
 \item[(I)] $E$ has good, ordinary reduction at $p$;
\item[(II)] $E[p]$ is an irreducible $G_\Q$-module and ramified at some odd prime $q\neq p$ that is
inert in $K$;
\item[(III)] both $2$ and $p$ split in $K$;
\item[(IV)] $(D,N_E)=1$;
\item[(V)] $\dim_\Qp H^1_f(K,V)=1$ and the restriction $H^1_f(K,V)\rightarrow
\prod_{\grp|p}H^1_f(K_\grp,V)$ is an injection,
\end{itemize}
then $E(K)$ has rank $1$ and $\ord_{s=1}L(E/K,s)=1$.\pagebreak

We recall that \cite[Thm.~B]{Skinner-GZ} is proved by showing that the formal logarithm
$\log_\grp P_K$ of a suitable Heegner point $P_K\in E(K)$
for a prime $\grp\mid p$ of $K$ does not vanish.
The proof of this is via Iwasawa Theory. Under hypotheses (I)--(IV) there exists
a $p$-adic $L$-function $L_\grp(E/K,\chi)$ that is a function of certain
anticyclotomic Hecke characters $\chi$ and that $p$-adically interpolates the algebraic parts
of the $L$-values $L(E,\chi,1) = L(V\otimes\sigma_\chi,0)$ for those $\chi$
having infinity-type $z^{-n}\bar z^n$ with $n>0$, where $\sigma_\chi$ is the
$p$-adic avatar of $\chi$. The connection with $\log_\grp P_K$ comes in a
remarkable formula for the value of this $p$-adic $L$-function at the trivial
character (which does not belong to the set of characters $\chi$ for which $L_{\grp}(E/K,\chi)$
is interpolating the value of a complex $L$-function) that was proved by
Bertolini, Darmon, and Prasanna~\cite[Main~Thm.]{BDP} and
Brooks~\cite[Thm.~IX.11]{Hunter} (see also \cite[\S 7.3]{Hunter}):
$$
L_\grp(E/K,1) = (1-a_p(E)+p)^2(\log_\grp P_K)^2.
$$

The Main Conjecture of Iwasawa Theory, as formulated by Greenberg, would identify
$L_\grp(E/K,\chi)$ as the generator of the characteristic ideal of a certain $p$-adic
Selmer group, one consequence of which would be
$$
L_\grp(E/K,1)=0\implies H^1_\grp(K,V)=0,
$$
where $H^1_\grp(K,V)\subset H^1(K,V)$ 
consists of those classes that are trivial at the primes of $K$ other than $\mathfrak{p}$
and are unrestricted at $\mathfrak{p}$.
Under the hypotheses (I)--(IV), 
Wan \cite[Thm.~1.1]{Wan-U31}
has proved enough in the direction of this conjecture to deduce this implication.
Furthermore, the hypothesis~(V) implies $H^1_\grp(K,V)=0$, hence
$L_\grp(E/K,1)\neq 0$, and so $\log_\grp P_K$ is non-zero. In particular,
$P_K\in E(K)$ is non-torsion. From (V) it then follows that the rank of $E(K)$ is 1,
and it follows from the general Gross--Zagier formula proved by Yuan--Zhang--Zhang~\cite{YZZ-GZ} that
$\ord_{s=1}L(E/K,s)=1$.

\bigskip

\noindent{\bf Proof of Theorem \ref{crit1}:}
By hypothesis, $E$ has good, ordinary reduction at $p$ and $E[p]$ is an irreducible $G_\Q$-module.
From hypotheses (d) and (e) and Lemma \ref{Selmerlemma} it then follows that
the root number of $E$ is $w(E)=-1$,
$\Sel_{p^\infty}(E)\cong\Qp/\Zp$, and $H^1_f(\Q,V)\isoarrow H^1_f(\Qp,V)$.
It also follows from Ribet's level-lowering result \cite{Ribet-Serre}
that $E[p]$ is ramified at some odd prime $q\neq p$
(otherwise, the Galois representation $E[p]$ would arise
from some cuspidal eigenform of weight $2$ and level $1$, of which there are none).
Let $K/\Q$ be an imaginary quadratic field such that
\begin{itemize}
\item the discriminant $D$ of $K$ is odd;
\item $2$ and $p$ split in $K$;
\item both the prime $q$ and one other odd prime divisor of $N_E$ are inert in $K$ and all other
prime divisors of $N_E$ split in $K$ (this is possible as $N_E$ is assumed to have at least two odd prime
divisors), in which case we have $w(E^D)=w(E)\chi_D(-N_E)= +1$ (see,
e.g., \cite[\S3.10]{Rohrlich});
\item $L(E^D,1)\neq 0$.
\end{itemize}
Such a quadratic field $K$ exists by a theorem of Friedberg and
Hoffstein \cite[Thm.~B]{FriedbergHoffstein}.  Here, $E^D$ is the
$D$-twist of $E$ and $\chi_D$ is the quadratic Dirichlet character of
conductor $D$ associated with $K$. We also use $\chi_D$ to denote the
quadratic character of $G_\Q$ having kernel $\Gal(\bQ/K)$.

We now appeal to Theorem B of \cite{Skinner-GZ} by verifying that 
hypotheses (I)--(V) hold for $E$ and~$K$.
That (I)--(IV) hold is immediate.
Since $L(E^D,1)\neq 0$, by \cite[Thm.~14.2]{Kato} or \cite[Cor.~B]{Kolyvagin},
we have $H^1_f(\Q,V_pE^D) = H^1_f(\Q,V\otimes\chi_D)=0$. In particular, $E^D(\Q)$ is finite. Then
$$
H^1_f(K,V)\cong H^1_f(\Q,V)\oplus H^1_f(\Q,V\otimes\chi_D) \cong H^1_f(\Q,V) \isoarrow
H^1_f(\Qp,V)\hookrightarrow \prod_{\grp|p} H^1_f(K_\grp,V),
$$
whence (V) also holds.  Therefore, $E(K)$ has rank 1 and $\ord_{s=1}L(E/K,s)=1$.
Since the rank of $E(K)$ is the sum of the ranks of $E(\Q)$ and $E^D(\Q)$ and since $E^D(\Q)$ is
finite, $E(\Q)$ must have rank 1.
As $L(E/K,s) = L(E,s)L(E^D,s)$ and $L(E^D,1)\neq 0$,
$\ord_{s=1}L(E/K,s) =1$ implies $\ord_{s=1}L(E,s)=1$.
\hfill\qed

\noindent{\bf Proof of Theorem \ref{crit2}:}
We show that (I)--(V) hold. Note that (I)--(IV) are immediate from the hypotheses of Theorem \ref{crit2}.
From Lemma \ref{Selmerlemma} it follows that
$\Sel_{p^\infty}(E)=0$ (so $E(\Q)$ is finite
and $H^1_f(\Q,V)=0$), $w(E)=+1$, $w(E^D)=-1$, $\Sel_{p^\infty}(E^D)\cong\Qp/\Zp$,  and
$H^1(\Q,V\otimes\chi_D)\isoarrow H^1_f(\Qp,V\otimes\chi_D)$. It follows that
$$
H^1_f(K,V)\cong H^1_f(\Q,V\otimes\chi_D) \isoarrow
H^1_f(\Qp,V\otimes\chi_D)\hookrightarrow \prod_{\grp|p} H^1_f(K_\grp,V),
$$
whence (V) also holds. We then conclude, as in the proof of Theorem \ref{crit1}
but reversing the roles of $E$ and $E^D$,
that $E^D(\Q)$ has rank 1 and $\ord_{s=1}L(E^D,s)=1$.
\hfill\qed

\section{Equidistribution of Selmer elements}
\label{Equi}

In this section, we prove Theorem~\ref{equi}, namely, that for any
``large'' family $F$ of elliptic curves~$E$ over $\Q$ lying in a
sufficiently small $\nu$-adic disc so that all $E(\Q_\nu)/5E(\Q_\nu)$
for $E\in F$ are naturally identified, the non-identity 
elements of the 5-Selmer group become equidistributed in 
$E(\Q_\nu)/5E(\Q_\nu)$.  We also make precise what we mean by
``naturally identified''.

\subsection{Counting Selmer elements}

We begin by recalling from \cite{BS5} what is meant by a large family of elliptic curves.
For each prime~$\ell$, let
$\Sigma_\ell$ be a closed subset of $\{(A,B)\in\Z_\ell^2 :
\Delta(A,B):=-4A^3-27B^2\neq 0\}$ with boundary of measure $0$.
To such a collection $\Sigma=(\Sigma_\ell)_\ell$,
we associate the set $F_\Sigma$ of elliptic curves over $\Q$, where
$E_{A,B}\in F_\Sigma$ if and only if $(A,B)\in\Sigma_\ell$ for all~$\ell$.
We then say that $F_\Sigma$ is a family of elliptic curves over~$\Q$ 
that is {\it defined by congruence conditions}.
We can also impose ``congruence conditions at infinity'' on $F_\Sigma$, by
insisting that an elliptic curve $E_{A,B}$ belongs to $F_\Sigma$ if
and only if $(A,B)$ belongs to $\Sigma_\infty$, where $\Sigma_\infty$
consists of all $(A,B)$ with $\Delta(A,B)$ positive, or negative, or either.

If $F$ is a family of elliptic curves over $\Q$ defined by congruence
conditions, then let $\Inv(F)$ denote the set $\{(A,B):E_{A,B}\in
F\}$. We define $\Inv_p(F)$ to be the set of those elements $(A,B)$ in
the $p$-adic closure of $\Inv(F)\subset\Z_p^2$ such that
$\Delta(A,B)\neq 0$.  We define $\Inv_\infty(F)$ to be
$\{(A,B)\in\R^2:\Delta(A,B)>0\}$, $\{(A,B)\in\R^2:\Delta(A,B)<0\}$, or
$\{(A,B)\in\R^2:\Delta(A,B)\neq 0\}$ in accordance with whether $F$
contains only curves of positive discriminant, negative discriminant,
or both, respectively.  Then a family $F$ of elliptic curves defined
by congruence conditions is said to be {\it large} if, for all
sufficiently large primes $\ell$, the set $\Inv_\ell(F)$ contains all
pairs $(A,B)\in\Z_\ell^2$ such that $\ell^2\nmid\Delta(A,B)$. For
example, the family of all elliptic curves is large, as is any
family of elliptic curves $E_{A,B}$ defined by finitely many congruence
conditions on $A$ and $B$.  The family
of all semistable elliptic curves is also large. Any large family
makes up a positive proportion of all elliptic curves~\cite[Thm.~3.17]{BS2}.

To explain the proof of Theorem~\ref{equi}, we briefly outline the
strategy of the proof from~\cite{BS5} of Theorem~\ref{thBS}.  The
proof uses the 
representation $V=5\otimes\wedge^25$ 
of quintuples $(A_1,\ldots,A_5)$ of
$5\times 5$ skew-symmetric matrices.  The group
$\GL_5\times\GL_5$ 
acts naturally on $V$, by
\begin{equation}\label{gl5action}
(g_1,g_2)\cdot
(A_1,A_2,A_3,A_4,A_5):=(g_1A_1g_1^t,g_1A_2g_1^t,g_1A_3g_1^t,g_1A_4g_1^t,g_1A_5g_1^t)\cdot g_2^t.
\end{equation}
Let the {\it determinant} of an element
$(g_1,g_2)\in\GL_5\times\GL_5$ by defined by $\det(g_1,g_2):=(\det
g_1)^2\det g_2$, and let $G$ denote the algebraic group  
\begin{equation}\label{eqg}
G:=\{(g_1,g_2)\in\GL_5\times\GL_5:\det(g_1,g_2)=1\}/\{(\lambda I_5,\lambda^{-2} I_5)\},\end{equation}
where $I_5$ denotes the identity element of $\GL_5$ and $\lambda\in
\mathbb{G}_{\rm m}$.  Then the action of
$\GL_5\times\GL_5$ on $V$ induces an action of $G$ on $V$.
The ring of polynomial invariants for the action of $G(\C)$ on $V(\C)$
turns out to have two independent generators,
having degrees 20 and 30, which we may denote by $A(v)$ and $B(v)$,
respectively.

Now let $K$ be a field of characteristic prime to 2, 3, and 5.  If
$v=(A_1,\ldots,A_5)\in V(K)$, then let
$v(t_1,\ldots,t_5):=A_1t_1+\cdots+A_5t_5$ denote the corresponding
matrix of linear forms in $t_1,\ldots,t_5$.  Then the determinant of
$v(t_1,\ldots,t_5)$ is zero,
since the determinant of any odd-dimensional skew-symmetric matrix is
zero.  So instead we 
consider its principal $4\times 4$ sub-Pfaffians (i.e., canonical
square-roots of the principal $4\times 4$ minors of the matrix
$v(t_1,\ldots,t_5)$).  This yields five quadrics in five variables,
which (whenever $\Delta(v) = \Delta(A(v),B(v)):=-4A(v)^3-27B(v)^2\neq 0$) cut out a smooth
genus one curve $C(v)$ in $\P^4$ over $K$ that is embedded by a complete linear
system of degree 5.  We have chosen our generators $A=A(v)$ and $B=B(v)$
of the invariant ring so that the Jacobian $E(v)$ of this genus one curve is the
elliptic curve given by
\begin{equation*}\label{jacformula}
E_{A,B}:y^2=x^3+Ax+B.
\end{equation*}
The discriminant $\Delta(v)$ on $V(K)$ (whose nonvanishing detects
stable orbits on $V(K)$) thus coincides with the discriminant of
the associated Weierstrass equation (\ref{jacformula}) of $E(v)$.
Conversely, given any genus one curve $C$ over $\Q$ in $\P^4$ embedded
by a compete linear system of degree 5 and with Jacobian~$E_{A,B}$,
there exists an
element of $V(\Q)$ having invariants 
$A$ and $B$, unique up to the
action of $G(\Q)$, that cuts out the curve $C$ in $\P^4$ in this way.
Moreover, if the curve $C$ has a point at every place of~$\Q$ (i.e.,
if $C$ is {\it locally soluble}), then there exists an element of
$V(\Z)$ with invariants 
that cuts out $C$~(see~\cite[Thm.~2.1]{Fishermin})!

We say that an element $v\in V(\Z)$ (or $V(\Q)$) is {\it locally
  soluble} if the curve $C(v)$ has a point locally at every place of
$\Q$.  We say that $v$ is ${\it soluble}$ if $C(v)$ has a rational
point.  The $G(\Q)$-orbits of locally soluble elements in $V(\Q)$
having invariants $A$ and $B$ with $\Delta(A,B)\neq 0$ are then in
natural bijection with the elements of the 5-Selmer group
$\Sel_5(E_{A,B})$ of $E_{A,B}$, and each such locally soluble
$G(\Q)$-orbit contains an element in $V(\Z)$.  The analogous
statements remain true when $\Q$ and $\Z$ are replaced by $\Q_\ell$
and $\Z_\ell$, and the 5-Selmer group of $E_{A,B}(\Q)$ is replaced by
$E_{A,B}(\Q_\ell)/5E_{A,B}(\Q_\ell)$. The correspondence of a soluble
$v\in V(\Q_\ell)$ with an element of the local $5$-Selmer group can be
made explicit through a 5-cover $\phi_v:C(v)\rightarrow E_{A,B}$ given
in terms of certain covariants for the action of $G$ on $V$. 
All these latter facts and much more information on this
representation can be found in the works of
Fisher~\cite{Fisher1,Fisher3,Fisher15} (see also~\cite{BhHo,BS5}).

In~\cite{BS5}, the $G(\Q)$-equivalence classes of locally soluble
elements $v\in V(\Z)$ having bounded height and $\Jac(C(v))\in F$ are
counted asymptotically.  By dividing through by the asymptotic number
of elliptic curves over $\Q$ of bounded height in $F$, the average
size of the 5-Selmer group of all elliptic curves in $F$ is obtained,
as in Theorem~\ref{thBS}.  Specifically, if $F=F_\Sigma$, then in
\cite[Prop.\ 33 and \S4.2]{BS5} (using \cite[Theorem~29]{BS5}),
it is first proved that
\begin{equation}\label{eqthsec5}
  \displaystyle\lim_{X\to\infty}\frac{\displaystyle\sum_{E\in F, H(E)<X}(\#\Sel_5(E)-1)}{\displaystyle\sum_{E\in F, H(E)<X}1}=\tau(G)\,
\frac{M_\infty(V,F;X)}{M_\infty(F;X)} \displaystyle\prod_\ell
\frac{M_\ell(V,F)}{M_\ell(F)}
\end{equation}
where
\begin{eqnarray*}\label{eqmpvf}
M_\ell(V,F)&:=&\displaystyle\int_{(A,B)\in \Inv_\ell(F)}\frac{1}{\#E_{A,B}(\Q_\ell)[5]}
\sum_{\sigma\in E_{A,B}(\Q_\ell)/5E_{A,B}(\Q_\ell)} \!1 \;\;\;
dAdB, \\
M_\ell(F)&:=&\displaystyle\int_{(A,B)\in \Inv_\ell(F)}
dAdB, \\[.1in]
    M_\infty(V,F;X)&:=&\displaystyle\int_{\substack{(A,B)\in\Inv_\infty(F)\\H(A,B)<X}}\,\,\displaystyle\frac{1}{\# E_{A,B}(\R)[5]}\,\,\!\:\sum_{\sigma\in E_{A,B}(\R)/5E_{A,B}(\R)}\,1\,\;\;\;dAdB,\\
        M_\infty(F;X)&:=&\displaystyle\int_{\substack{(A,B)\in\Inv_\infty(F)\\H(A,B)<X}}dAdB,
\end{eqnarray*}
and $\tau(G)$ denotes the Tamagawa number of $G$ as an algebraic group
over $\Q$. 
The right hand side of Equation (\ref{eqthsec5}) then simplifies to $\tau(G)=5$.

\subsection{Proof of Theorem~\ref{equi}}

We may extend the arguments of \S4.1 to also prove  equidistribution of
$5$-Selmer elements in $E(\Q_\nu)/5E(\Q_\nu)$.  We first use the
representation of $G(\Q_\nu)$ on $V(\Q_\nu)$ to naturally identify the 
quotients $E(v)(\Q_\nu)/5E(v)(\Q_\nu)$ corresponding to elements $v$ in
sufficiently small neighborhoods in~$V(\Q_\nu)$.  With this
identification, we then obtain the following version of Theorem~\ref{equi}:

\begin{theorem}\label{equi2}Fix a place $\nu$ of $\Q$.
Let $F=F_\Sigma$ be a large family of elliptic curves $E$ such that
\begin{itemize}
\item[{\rm (a)}]
the cardinality of $E(\Q_\nu)/5E(\Q_\nu)$ is a constant $k$ for all $E$ in $F;$ and
\vspace{.1in}
\item[{\rm (b)}] the set
\begin{equation*}\label{updef}
U_\nu(F) := \{\mbox{soluble\ elements\ in $V(\Z_\nu)$ having invariants $(A,B)$
s.t. $(A,B)\in\Sigma_\nu$}\}
\end{equation*}
  can be partitioned into $k$ open sets $\Omega_1,\ldots,\Omega_k$
  such that$:$
\begin{itemize}
\item[{\rm (i)}]
for all $i$, if two elements in $\Omega_i$ have the same invariants
$A,B$, then they are $G(\Q_\nu)$-equivalent$;$ and
\item[{\rm (ii)}]
 for all $i\neq j$, 
$(G(\Q_\nu)\cdot \Omega_i)\cap (G(\Q_\nu)\cdot\Omega_j)=\emptyset$.
\end{itemize}
\end{itemize}
Then for $E\in F$, the elements of $E(\Q_\nu)/5E(\Q_\nu)$
are in bijection with the sets $\Omega_i$.
$($In particular, the groups $E(\Q_\nu)/5E(\Q_\nu)$
are identified for all $E$ in $F$.$)$
When the elliptic curves $E$ in $F$ are ordered by height, the images of the
nonidentity $5$-Selmer elements under the restriction map
\begin{equation*}
\Sel_5(E)\stackrel{\res}{\to} E(\Q_\nu)/5E(\Q_\nu)\leftrightarrow \{\Omega_1,\ldots,\Omega_k\}
\end{equation*} are equidistributed.
\end{theorem}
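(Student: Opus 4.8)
\noindent\textbf{Proof plan for Theorem~\ref{equi2}.}
The first assertion is local and follows from the $5$-Selmer parametrization recalled above. Fix $E=E_{A,B}\in F$ and consider the fiber of $U_\nu(F)$ above $(A,B)$, i.e.\ the soluble elements of $V(\Z_\nu)$ with invariants $(A,B)$. By the local analogue of the parametrization (with $\Q,\Z$ replaced by $\Q_\nu,\Z_\nu$ and $\Sel_5(E_{A,B})$ by $E_{A,B}(\Q_\nu)/5E_{A,B}(\Q_\nu)$), these split into exactly $k=\#\big(E(\Q_\nu)/5E(\Q_\nu)\big)$ nonempty $G(\Q_\nu)$-orbits. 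Hypothesis (b)(i) says that each $\Omega_i$ meets this fiber inside a single $G(\Q_\nu)$-orbit; since $U_\nu(F)=\Omega_1\sqcup\cdots\sqcup\Omega_k$ and the fiber lies in $U_\nu(F)$, the $k$ sets $\Omega_i\cap(\text{fiber})$ must be exactly the $k$ orbits, each occurring once. As each $\Omega_i$ is a fixed subset of $V(\Z_\nu)$, the resulting bijection $\{\Omega_1,\dots,\Omega_k\}\leftrightarrow E_{A,B}(\Q_\nu)/5E_{A,B}(\Q_\nu)$ is the same for every $(A,B)$ arising from an $E\in F$, which is the asserted identification of the groups $E(\Q_\nu)/5E(\Q_\nu)$ for $E\in F$. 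Since the parametrization is compatible with restriction of cohomology classes, $\res\colon\Sel_5(E)\to E(\Q_\nu)/5E(\Q_\nu)$ then sends a class to the $\Omega_i$ containing any integral representative $v\in V(\Z)$ of the corresponding $G(\Q)$-orbit; this is well defined because $G(\Q)$-equivalent elements of $V(\Z)$ have equal invariants and are $G(\Q_\nu)$-equivalent, hence (by the fiberwise description together with (b)(ii)) lie in the same $\Omega_i$.

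For the equidistribution statement we rerun the counting argument of \cite{BS5} behind Equation~(\ref{eqthsec5}), now additionally requiring that the integral representative lie in $\Omega_i$ at $\nu$. For $1\le i\le k$ set
\begin{equation*}
N_i(X):=\#\{(E,\sigma)\ :\ E\in F,\ H(E)<X,\ 0\neq\sigma\in\Sel_5(E),\ \res_\nu(\sigma)=\Omega_i\}.
\end{equation*}
By the first paragraph, $\sum_{i=1}^k N_i(X)=\sum_{E\in F,\,H(E)<X}(\#\Sel_5(E)-1)$, the quantity computed in \cite[Prop.~33, \S4.2]{BS5} via \cite[Thm.~29]{BS5} by counting $G(\Q)$-classes of locally soluble $v\in V(\Z)$ with $\Jac(C(v))\in F$ and bounded height, the reducible orbits (in particular the one parametrizing the identity class, which accounts for the ``$-1$'') contributing negligibly. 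The condition ``$v\in\Omega_i$'' is an admissible $\nu$-adic condition on $v$: $\Omega_i$ is open by (b), and being a piece of the soluble locus (which is open and closed in the region $\{\Delta\neq 0\}$, cf.\ \cite{BS5}) it has boundary of $\nu$-adic measure zero. The same argument then yields
\begin{equation*}
N_i(X)\ \sim\ \tau(G)\,\frac{M_\infty(V,F;X)}{M_\infty(F;X)}\;\frac{M_\nu(\Omega_i)}{M_\nu(F)}\prod_{\ell\neq\nu}\frac{M_\ell(V,F)}{M_\ell(F)},
\end{equation*}
with the evident modification if $\nu=\infty$, where $M_\nu(\Omega_i):=\int_{\Omega_i}dv$ in the normalization used above; every factor other than $M_\nu(\Omega_i)$ is as in~(\ref{eqthsec5}).

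It remains to see that $M_\nu(\Omega_i)$ is independent of $i$; granting this, $N_i(X)\big/\sum_j N_j(X)\to M_\nu(\Omega_i)\big/\sum_j M_\nu(\Omega_j)=1/k$, which is exactly the claimed equidistribution. By the fiberwise description of the first paragraph, the portion of $\Omega_i$ lying over any $(A,B)\in\Sigma_\nu$ is a single $G(\Q_\nu)$-orbit of soluble elements; by the orbit-volume computation of \cite[\S4.2]{BS5}, each of the $k$ orbits over $(A,B)$ has the same volume (equal to $\#E_{A,B}(\Q_\nu)[5]^{-1}$ times a factor depending only on $\nu$, since the stabilizer of a soluble element is the untwisted $E_{A,B}(\Q_\nu)[5]$). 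Integrating over $\Sigma_\nu$ shows all the $M_\nu(\Omega_i)$ are equal, and summing recovers $M_\nu(V,F)$; hence $M_\nu(\Omega_i)=\tfrac1k M_\nu(V,F)$, completing the proof modulo the counting input.

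The step requiring the most care is that last counting input: one must check that the geometry-of-numbers estimates of \cite[Thm.~29]{BS5} tolerate the extra (infinitely many, but measure-theoretically tame) $\nu$-adic congruence conditions cutting out $\Omega_i$ from the set of integral representatives, with the cutoff of reducible and cuspidal contributions uniform in this condition — this uniformity is what guarantees that the ``$-1$'' in (\ref{eqthsec5}) does not introduce an asymmetry among the $\Omega_i$. The identification in the first paragraph and the local volume bookkeeping in the third are then routine consequences of the theory recalled above.
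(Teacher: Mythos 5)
Your argument follows the same route as the paper's (very terse) proof: restrict the local factor $M_\nu(V,F)$ in Equation~(\ref{eqthsec5}) to the representatives lying in a single $\Omega_i$, invoke \cite[Thm.~29]{BS5} for the resulting (still acceptable, by hypothesis~(b)) congruence conditions, and observe that the answer is proportional to the measure of $\Omega_i$, which is the same for each $i$. Your version is more explicit than the paper's about two points that the paper leaves implicit — the derivation of the bijection $\{\Omega_i\}\leftrightarrow E(\Q_\nu)/5E(\Q_\nu)$ from hypotheses (b)(i)–(ii) together with the local orbit parametrization, and the orbit-volume computation giving $M_\nu(\Omega_i)=\tfrac1k M_\nu(V,F)$ — but the underlying approach is identical.
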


To prove Theorem~\ref{equi2},
on the right hand side of Equation~(\ref{eqthsec5})
(which comes from \cite[Prop.\ 33 and \S4.2]{BS5}),
we replace the sum over all $\sigma$ in $E_{A,B}(\Q_\nu)/5E_{A,B}(\Q_\nu)$ in the expression
for $M_\nu(V,F)$ with
the corresponding sum over $\sigma$ in any subset $S\subset
E_{A,B}(\Q_\nu)/5E_{A,B}(\Q_\nu)$.  By property (b), we are still
counting elements in a weighted subset of $V(\Z)$ defined by an
``acceptable'' set of congruence conditions, so \cite[Theorem~29]{BS5}
again applies.  This gives us the average number of nonidentity
5-Selmer elements that map to $S$, and we see that the result is
proportional to the size of $S$, proving Theorem~\ref{equi2}.

\begin{remark}{\em
Note that the same argument also allows one to show equidistribution of non-identity 5-Selmer elements in
$\prod_{\nu\in S}E(\Q_\nu)/5E(\Q_\nu)$ for any finite set~$S$ of places of $\Q$, provided that our
large family $F$ of elliptic curves lies in the intersection of sufficiently small $\nu$-adic discs ($\nu\in S$)
so that both (a) and (b) are satisfied for all $\nu\in S$.}
\end{remark}

To deduce Theorem~\ref{equi}, we combine Theorem~\ref{equi2} with two
propositions.  The first shows that if the elliptic curves in a large
family $F$ all have invariants in a sufficiently small $\nu$-adic
neighborhood, then the local $p$-Selmer groups are naturally
identified so that the images of the torsion groups $E(\Q_\nu)[p]$ are
also identified. The second proposition states that for a family of
elliptic curves with invariants~$A,B$ in a similarly small neighborhood,
conditions (a) and (b) of Theorem~\ref{equi2} hold. Comparing these
two propositions and their proofs then shows that for small enough
neighborhoods, the identification of local $5$-Selmer groups from the
first of these propositions agrees with that coming from
Theorem~\ref{equi2}, from which Theorem~\ref{equi} follows.

\begin{proposition}\label{localprop} Let $p\geq 5$ and $\ell$ be primes.
Let $E\subset \P^2$ be an elliptic curve over $\Ql$ given by the Weierstrass equation
\begin{equation}\label{W1}
y^2 + a_1xy + a_3 = x^3 + a_2x^2+a_4x+a_6, \ \ a_i\in\Ql.
\end{equation}
There exists $h=h(E)\geq 2$ such that if $E'\subset\P^2$ is another elliptic curve
over $\Ql$ given by a Weierstrass equation
\begin{equation}\label{W2}
y^2 + a_1'xy + a_3' = x^3 + a_2'x^2+a_4'x+a_6', \ \ a_i'\in\Ql,
\end{equation}
with
\begin{equation}\label{W3}
 |a_i-a_i'|_\ell \leq \ell^{-h},
\end{equation}
then $E(\Ql)/pE(\Ql)$ and $E'(\Ql)/pE'(\Ql)$ are
identified so that
\begin{itemize}
\item[$(i)$] if $P=(x,y)\in E(\Ql)$ and $P'=(x,y)\in E'(\Ql)$ are such that $|x-x'|_\ell \leq \ell^{-h}$,
$|y-y'|_\ell\leq \ell^{-h}$, then the images of $P$ and $P'$ are identified;
\item[$(ii)$] the natural images of $E(\Ql)[p]\rightarrow E(\Ql)/pE(\Ql)$ and
$E'(\Ql)[p]\rightarrow E'(\Ql)/pE'(\Ql)$ are identified.
\end{itemize}
\end{proposition}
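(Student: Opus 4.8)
The plan is to reduce everything to the structure of $E(\Ql)$ as a topological group equipped with its formal-group filtration, and to exploit that this structure varies $\ell$-adically continuously with the Weierstrass coefficients. After an $\ell$-adic change of coordinates — which, when applied simultaneously to $E$ and $E'$, only shifts the eventual $h$ by a bounded amount — I may assume that $E$ and $E'$ are given by minimal integral models over $\Zl$. Let $\hat E$ denote the formal group of $E$ and, for an integer $N\geq 2$, set $E_N:=\hat E(\ell^N\Zl)\subseteq E(\Ql)$. Via the formal logarithm $E_N\cong\ell^N\Zl$ is torsion-free and procyclic over $\Zl$, and since $pE_N=E_{N+v_\ell(p)}$ with $v_\ell(p)\leq 1$ one has $E_N\subseteq pE_1\subseteq pE(\Ql)$. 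Consequently
\begin{equation*}
E(\Ql)/pE(\Ql)=\bar G/p\bar G,\qquad \bar G:=E(\Ql)/E_N,
\end{equation*}
where $\bar G$ is a \emph{finite} group (an iterated extension of the component group of the N\'eron model of $E$, of $\tilde E_{\mathrm{ns}}(\F_\ell)$, and of $\Z/\ell^{N-1}\Z$); and since $E_N$ is torsion-free, the image of $E(\Ql)[p]$ in $E(\Ql)/pE(\Ql)$ is the image in $\bar G/p\bar G$ of the image of $E(\Ql)[p]$ in $\bar G$. So it is enough to produce a canonical isomorphism $\bar G\isoarrow\bar G'$ which $(\mathrm{i}')$ sends the class of any $P\in E(\Ql)$ to the class of any $P'\in E'(\Ql)$ whose coordinates are within $\ell^{-h}$ of those of $P$, and $(\mathrm{ii}')$ carries the image of $E(\Ql)[p]$ onto the image of $E'(\Ql)[p]$.

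The finite group $\bar G$ is determined by the reduction of the minimal model of $E$ modulo a fixed power $\ell^M$ of $\ell$ (for good reduction $\bar G=E(\Z/\ell^N\Z)$ and one may take $M=N$; in general $M=M(E)$ is the $\ell$-adic accuracy needed to run Tate's algorithm, which for a fixed $E$ examines only finitely many, and boundedly large, valuations). Hence for $h\geq M$ the coefficients of $E$ and $E'$ agree modulo $\ell^M$, Tate's algorithm runs identically for the two curves, and one obtains a canonical identification $\bar G=\bar G'$. To verify $(\mathrm{i}')$ I would use a Hensel/Krasner estimate: any $P\in E(\Ql)$ of bounded denominator and any nearby $P'\in E'(\Ql)$ reduce to the same element of $\bar G=\bar G'$ once $h\gg M$, while the points lying in the formal group $E_1$ — together with their neighbors — are handled through the parameter $z=-x/y$ (a point in $E_j\setminus E_{j+1}$ and any $\ell^{-h}$-close point of $E'$ have the same image in $E_1/E_N\cong E_1'/E_N'$ as soon as $h\geq N$). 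For $(\mathrm{ii}')$: since $p\geq 5$ is odd, $E[p]$ meets $E[2]$ trivially, so the $p$-division polynomial $\psi_p^{(E)}\in\Zl[x]$ has simple roots and, for $h$ large, is matched root-by-root over $\Ql$ with $\psi_p^{(E')}$ with $\ell$-adically close roots, the corresponding $y$-coordinates being simple roots of the relevant quadratics; thus $E(\Ql)[p]$ and $E'(\Ql)[p]$ are in coordinate-matching bijection and, by $(\mathrm{i}')$, have the same image in $\bar G=\bar G'$.

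The main obstacle is the Hensel/Krasner bookkeeping underlying $(\mathrm{i}')$, and specifically the case of a point $P\in E(\Ql)$ lying $\ell$-adically close to a $\Ql$-rational $2$-torsion point $T_0$: there the quadratic cutting out the $y$-coordinates of the points of $E'$ above $x=x(P)$ can acquire a second root, producing a spurious candidate $P'\approx -P$. One resolves this by checking that for $h$ large this second point fails to lie within $\ell^{-h}$ of $P$ unless already $P-T_0\in E_N$, in which case $P$ and every $\ell^{-h}$-close $P'$ both reduce to the class of $T_0$ in $\bar G=\bar G'$ regardless. The remaining task is purely quantitative: confirming that the finitely many thresholds in play — minimality of the model, the accuracy $M$ for Tate's algorithm, the $\ell$-adic separation of $E[p]$ from $E[2]$ and the lower bound on the ``$y$-derivatives'' at the (finitely many) rational $p$-torsion points, the convergence/isomorphism range of the formal logarithm on $\ell^N\Zl$, and the depth $N\geq 2$ ensuring $E_N\subseteq pE(\Ql)$ — can all be met by a single $h=h(E)$; once they are, the identity $E(\Ql)/pE(\Ql)=\bar G/p\bar G$ and the bad-reduction input from Tate's algorithm are formal, and Proposition~\ref{localprop} follows.
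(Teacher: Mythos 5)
Your proposal is correct in broad outline and overlaps substantially with the paper's argument, but the route is not identical and the one place where you diverge is also where the gap appears.

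The paper's key simplification, which you do not use, is to pass immediately to the smooth-locus group scheme $E_0\subset E$ over $\Zl$: because $p\geq 5$ and the component group $E(\Ql)/E_0(\Ql)$ has order at most $4$ whenever $E$ does not have split multiplicative reduction, one has $E_0(\Ql)/pE_0(\Ql)\isoarrow E(\Ql)/pE(\Ql)$, and then the finite model is simply $E_0(\Zl/\ell^2\Zl)/pE_0(\Zl/\ell^2\Zl)$ (mod $\ell$ even suffices if $\ell\neq p$). The identification $\bar G=\bar G'$ for the paper is then literally an equality of functors of points once the minimal Weierstrass coefficients agree modulo $\ell^2$, so there is nothing to prove. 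You instead take $\bar G=E(\Ql)/E_N$, which keeps the full component group inside $\bar G$, and then you must appeal to Tate's algorithm to produce the ``canonical identification $\bar G=\bar G'$.'' This is the gap: Tate's algorithm running identically for $E$ and $E'$ tells you the two Kodaira types and component groups are abstractly isomorphic, but it does not by itself hand you a \emph{canonical} group isomorphism $E(\Ql)/E_N\isoarrow E'(\Ql)/E'_N$, since $\bar G$ is not simply $E(\Zl/\ell^N\Zl)$ when the Weierstrass scheme is singular (the image of $E(\Zl)\to E(\Zl/\ell^N\Zl)$ over the singular point has to be controlled, and lifting over the singular locus is not Hensel's lemma). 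This needs an actual argument, e.g.\ by carrying out the blow-ups in Tate's algorithm in families and comparing the resulting N\'eron models modulo $\ell^{M'}$. The paper avoids the issue entirely by the $E_0$ trick; both you and the paper leave split multiplicative reduction unaddressed (the paper explicitly says it is not needed, as the proposition is applied only at $\ell=5$ where the curves in $\CF$ have good reduction).

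In the good-reduction case (the only one the paper actually uses), your argument essentially coincides with the paper's, except you carry a general $N\geq 2$ where the paper takes $N=2$ (or $1$ for $\ell\neq p$). Your treatment of part $(ii)$ via division polynomials $\psi_p$ and Krasner's lemma is the same as the paper's. Your observation that $E_N\subseteq pE_1$ forces $E(\Ql)/pE(\Ql)\cong\bar G/p\bar G$ is correct and is the same structural point the paper uses for $E_0$. So: right idea, same Krasner argument for torsion, but the passage to $E_0$ is the missing simplification, and without it your appeal to Tate's algorithm to identify $\bar G$ with $\bar G'$ in the bad-reduction cases is asserted rather than proven.
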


\begin{proof}
  We first assume that \eqref{W1} is a minimal Weierstrass
  equation. In particular, $a_i\in \Zl$, Equation~\eqref{W1} defines a closed
  $\Zl$-subscheme of $\P^2$ that we also denote by $E$, and the open
  subscheme~$E_0$ obtained by removing the singular points on the
  closed fiber of $E$ is a group scheme over $\Zl$
  ($E_0(\Zl)=E_0(\Ql)$ is just the subgroup of points of $E(\Ql)$
  having nonsingular reduction modulo~$\ell$).  Suppose also that $E$
  does not have split multiplicative reduction. Then the component
  group $E(\Ql)/E_0(\Ql)$ has order at most~$4$, hence, as $p\geq 5$,
  $E_0(\Ql)/pE_0(\Ql)\isoarrow E(\Ql)/pE(\Ql)$. As $E_0(\Ql) =
  E_0(\Zl)$, it is easy to see that the reduction modulo $\ell^2$ map
  (modulo $\ell$ suffices if $\ell\neq p$) induces an isomorphism
  $E_0(\Ql)/pE_0(\Ql)\isoarrow
  E_0(\Zl/\ell^2\Zl)/pE_0(\Zl/\ell^2\Zl)$.

  Suppose \eqref{W3} holds.  If $h\geq 2$ is sufficiently large (in
  terms of the $a_i$'s), then \eqref{W2} will also be a minimal
  equation and $E'$ will have the same reduction type as $E$, and so,
  since $E_0$ and $E_0'$ have the same reduction modulo $\ell^2$,
  there are natural identifications
$$
E(\Ql)/pE(\Ql) \cong E_0(\Zl/\ell^2\Zl)/pE_0(\Zl/\ell^2\Zl) = E_0'(\Zl/\ell^2\Zl)/pE_0'(\Zl/\ell^2\Zl) = E'(\Ql)/pE'(\Ql).
$$
Then it is clear that $(i)$ also holds. If $E$ (and hence $E'$) has
split multiplicative reduction, then a similar identification for
which $(i)$ also holds can be deduced from the Tate uniformization of
$E(\Ql)$ and $E'(\Ql)$. As this case is not needed in this paper, we
omit the details.

For a general Weierstrass equation \eqref{W1}, we note that if $h$ is
large enough, then any transformation $(x,y)\mapsto (u^2x+r,
u^3y+u^2sx+t)$, with $r,s,t,u\in\Q_\ell$, that takes \eqref{W1} to a minimal
equation also takes a Weierstrass equation \eqref{W2} (provided that
\eqref{W3} holds) to a minimal equation. Furthermore, if $h$ is
sufficiently large (with respect to a fixed such transformation), then
the coefficients of the resulting minimal equations will be close
enough that the preceding arguments yield the desired identification
and that $(i)$ holds.

We now show that $(ii)$ also holds for $h$ large enough.  Let $h_0$ be
such that for a Weierstrass equation \eqref{W2}, if
$|a_i-a_i'|_\ell\leq \ell^{-h_0}$ then $E(\Ql)/pE(\Ql)$ and
$E'(\Ql)/pE'(\Ql)$ are naturally identified so that if $P=(x,y)\in
E(\Ql)$ and $P'=(x',y')\in E'(\Ql)$ are points with
$|x-x'|_\ell,|y-y'|_\ell\leq \ell^{-h_0}$, then the images of $P$ and
$P'$ are identified (we have just proved the existence of such an
$h_0$).

The $x$-ordinates of the $p^2-1$ non-trivial $p$-torsion points
$P_i=(x_i,y_i)$ and $P_i'=(x_i',y_i')$, $i=1,\ldots,p^2-1$, of
$E(\bQ_\ell)$ and $E'(\bQ_\ell)$, respectively, are the roots of
polynomials of degree $(p^2-1)/2$ (the division polynomials denoted
$\psi_p$ in \cite{Silverman}) whose coefficients are given by
universal polynomials over $\Z$ in the coefficients of the Weierstrass
equations \eqref{W1} and \eqref{W2}. It then follows, say from
Krasner's lemma, that if the Weierstrass equations \eqref{W1} and
\eqref{W2} are sufficiently close (that is, if $|a_i-a_i'|_\ell <
\ell^{-h_1}$ with $h_1>h_0$ sufficiently large) then the coefficients
of these division polynomials are so close that the $P_i$ and $P_i'$
can be ordered so that for each $i$, $P_i$ and $P_i'$ are defined over
the same extension of~$\Ql$ and satisfy $|x_i-x_i'|_\ell\leq
\ell^{-h_0}$ and $|y_i-y_i'|_\ell\leq \ell^{-h_0}$, and so, by the
choice of $h_0$, the points in $E(\Ql)[p]$ and $E'(\Ql)[p]$ are
identified in $E(\Ql)/pE(\Ql)=E'(\Ql)/pE'(\Ql)$. Thus the conclusions
of the proposition hold with $h(E)=h_1$.  \hfill\end{proof}

\noindent

As explained in the proof of this proposition, if the Weierstrass equation
\eqref{W1} is minimal and~$E$ does not have split multiplicative reduction at $\ell$, then
Part $(i)$ of the proposition holds with~$h=2$ in \eqref{W3}.

\begin{proposition}\label{nbdexistence}
Fix a place $\nu$ of $\Q$.   For any given $(A,B)\in
\Z_\nu^{2}\setminus\{\Delta=0\}$, there exists a $\nu$-adic neighborhood $W$ of
$(A_0,B_0)=(A,B)$ in
$\Z_\nu^{2}$ such that the corresponding $($large$)$ family $F=F(W)$, consisting of all elliptic
curves $E_{A,B}$ with $(A,B)\in W$, satisfies both 
{\rm (a)} and~{\rm (b)} of Theorem~$\ref{equi2}$.
\end{proposition}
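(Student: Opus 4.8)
The plan is to dispatch the archimedean place at once and then focus on a finite place $\nu=\ell$. When $\nu=\infty$ one has $E(\R)/5E(\R)=0$ for all $E\in F$ --- multiplication by $5$ is surjective on $\R/\Z$ and on $\Z/2\Z$, the two possible connected-component structures of $E(\R)$ --- so $k=1$; since the soluble $G(\R)$-orbits over a fixed $(A,B)$ are in bijection with $E_{A,B}(\R)/5E_{A,B}(\R)=0$, there is exactly one such orbit, and $U_\infty(F)$ (which is open, since solubility is an open condition by the deformation argument of the last paragraph, applied over $\R$) is a single set $\Omega_1$ for which {\rm (a)}, {\rm (b)(i)}, {\rm (b)(ii)} hold trivially, for any ball $W\ni(A_0,B_0)$ on which $\Delta\neq 0$. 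So assume $\nu=\ell$. I would first apply Proposition~\ref{localprop} to $E_{A_0,B_0}$ over $\Ql$: letting $h$ be the integer it produces (we are free to take $h$ larger, and do so if needed so that $\Delta\neq 0$ on the ball $W:=\{(A,B)\in\Zl^2: |A-A_0|_\ell\le\ell^{-h},\ |B-B_0|_\ell\le\ell^{-h}\}$), and noting that the members of $F=F(W)$ are precisely the $E_{A,B}$ with Weierstrass coefficients $(a_1,a_2,a_3,a_4,a_6)=(0,0,0,A,B)$, we conclude that all the groups $E_{A,B}(\Ql)/5E_{A,B}(\Ql)$ for $(A,B)\in W$ are identified with one fixed finite group $S_0:=E_{A_0,B_0}(\Ql)/5E_{A_0,B_0}(\Ql)$, of some order $k$. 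This is condition {\rm (a)}.

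For condition {\rm (b)}, recall from Section~\ref{Equi} that a soluble $v\in V(\Ql)$ with invariants $(A,B)$, $\Delta(A,B)\neq 0$, has a well-defined class $c_0(v)\in E_{A,B}(\Ql)/5E_{A,B}(\Ql)$ --- namely $\phi_v(P)$ modulo $5E_{A,B}(\Ql)$ for any $P\in C(v)(\Ql)$, where $\phi_v:C(v)\to E_{A,B}$ is the associated $5$-cover --- that $v\mapsto c_0(v)$ is constant on $G(\Ql)$-orbits, and that it induces a bijection between the soluble $G(\Ql)$-orbits with invariants $(A,B)$ and all of $E_{A,B}(\Ql)/5E_{A,B}(\Ql)$, with every such orbit meeting $V(\Zl)$. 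Transporting $c_0$ into $S_0$ through the identifications of Proposition~\ref{localprop} (which depend only on $(A,B)$, not on the orbit), we obtain a $G(\Ql)$-invariant map $c$ from the set of soluble $v\in V(\Ql)$ with invariants in $W$ onto $S_0$ whose restriction to each fibre over a fixed $(A,B)\in W$ is a bijection from the soluble $G(\Ql)$-orbits onto $S_0$. Choose soluble $v_1,\ldots,v_k\in V(\Zl)$ representing the $k$ orbits over $(A_0,B_0)$, so that $\{c(v_1),\ldots,c(v_k)\}=S_0$, and set $\Omega_i:=\{v\in U_\ell(F): c(v)=c(v_i)\}$. These $\Omega_i$ then partition $U_\ell(F)$; condition {\rm (b)(i)} holds because $c$ is injective on orbits within each fibre; and condition {\rm (b)(ii)} holds because $G(\Ql)\cdot\Omega_i\subseteq c^{-1}(c(v_i))$, by $G(\Ql)$-invariance of $c$, while the fibres $c^{-1}(c(v_i))$ are pairwise disjoint. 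It thus remains to check that each $\Omega_i$ is open, equivalently that $c$ is locally constant on the soluble points of $V(\Zl)$ with invariants in $W$.

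This is the crux, and the step I expect to demand the most care. Fix a soluble $v_0\in V(\Zl)$ with invariants in $W$. Since $\Delta(v_0)\neq 0$, the curve $C(v_0)\subset\P^4$ --- cut out by the five principal $4\times 4$ sub-Pfaffians of $v_0(t_1,\ldots,t_5)$, whose coefficients are polynomials in the entries of $v_0$ --- is smooth over $\Ql$ and, by solubility, carries a point $P_0\in C(v_0)(\Ql)$. Because the equations of $C(v)$ vary polynomially with $v$ and $C(v_0)$ is smooth at $P_0$, Hensel's lemma (applied to this system near $P_0$) produces a neighborhood $\Omega$ of $v_0$ in $V(\Zl)$ and, for each $v\in\Omega$, a point $P(v)\in C(v)(\Ql)$ that may be made as close to $P_0$ as we wish by shrinking $\Omega$; in particular every $v\in\Omega$ is soluble. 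As $\phi_v$ is given by covariants of the $G$-action, $\phi_v(P(v))$ is correspondingly close to $\phi_{v_0}(P_0)$, and shrinking $\Omega$ again we may assume $(A(v),B(v))\in W$. Now the three Weierstrass equations of $E_{A(v),B(v)}$, $E_{A(v_0),B(v_0)}$ and $E_{A_0,B_0}$ lie within $\ell^{-h}$ of one another, so part $(i)$ of Proposition~\ref{localprop} --- having enlarged $h$, and shrunk $W$ correspondingly, at the outset if necessary --- forces $\phi_v(P(v))$ and $\phi_{v_0}(P_0)$ to have the same image in $S_0$; that is, $c(v)=c(v_0)$. Hence $c$ is constant on $\Omega$, the $\Omega_i$ are open, and Proposition~\ref{nbdexistence} follows. (In the ensuing deduction of Theorem~\ref{equi} one checks, immediately from this construction, that the identification of local $5$-Selmer groups implicit in the $\Omega_i$ is exactly the one furnished by Proposition~\ref{localprop}.)
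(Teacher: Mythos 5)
Your proof is correct, but it runs along a genuinely different track from the paper's. The paper constructs the sets $\Omega_i$ topologically: it takes $\varepsilon$-neighborhoods $B_\varepsilon(Y_i)$ of the $k$ soluble $G(\Q_p)$-orbits $Y_1,\ldots,Y_k$ lying over $(A_0,B_0)$, and then uses a compactness argument --- the set $\{g\in G(\Q_p): g\cdot v\in V(\Z_p)\}$ is compact and locally constant in $v$, which follows from the Cartan decomposition $G(\Q_p)=G(\Z_p)T(\Q_p)G(\Z_p)$ --- to show that for $\varepsilon$ small, the saturations $(G(\Q_p)\cdot B_\varepsilon(Y_i))\cap V(\Z_p)$ are pairwise disjoint open-and-compact sets; the $\Omega_i$ are then these saturations intersected with $\pi^{-1}(W)$, and $W$ is obtained by shrinking $\bigcap_i\pi(Z_i)$ so that Proposition~\ref{localprop} gives $\#(E(\Q_p)/5E(\Q_p))=k$ throughout. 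You instead build the $\Omega_i$ as the level sets of the explicit local Kummer map $c$ that sends a soluble $v$ to the class of $\phi_v(P)$ in the fixed group $S_0$, transported through the identifications of Proposition~\ref{localprop}, and then show $c$ is locally constant via a Hensel-lift of a point $P_0\in C(v_0)(\Q_\ell)$ to nearby $C(v)$ combined with Proposition~\ref{localprop}(i). Both arguments are sound; yours trades the Cartan/compactness lemma for an explicit continuity argument for the covering map. A modest bonus of your version is that the compatibility with the identification of Proposition~\ref{localprop} --- which the paper checks separately in the paragraph following the proposition when deducing Theorem~\ref{equi} --- is built in from the start, since $c$ was defined using exactly that identification. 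One small piece you rely on but could make slightly more explicit is that the transported identification is consistent between two curves $E_{A(v),B(v)}$ and $E_{A(v_0),B(v_0)}$ that are each close to $E_{A_0,B_0}$: this holds because the identification in Proposition~\ref{localprop} is realized via reduction of the minimal model modulo $\ell^2$, which is manifestly transitive, and you implicitly invoke this when enlarging $h$; it is fine, but worth a sentence. Your handling of $\nu=\infty$ (observing $E(\R)/5E(\R)=0$) is also more explicit than the paper's one-line dismissal, and correct.
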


\begin{proof}
The proposition is trivial for $\nu=\infty$, as we may let $W$ equal the set of all $(A,B)$
having discriminant positive, negative, or either.
If $\nu$ is a finite prime $p$, let $k$ be the cardinality of
$E_{A_0,B_0}(\Q_p)/5E_{A_0,B_0}(\Q_p)$.  Then the set $Y$ of soluble
elements in the inverse image of $(A,B)$
under the map $\pi:V(\Z_p) \to \Z_p^{2}$, given by sending $v$ to the invariants $(A(v),B(v))$,
is the disjoint union of $k$ nonempty compact sets $Y_1,\ldots,Y_k$,
namely, the $k$\, $G(\Q_p)$-equivalence classes in $V(\Z_p)$
comprising~$Y$.

Let
$Z_1,\ldots,Z_k\subset V(\Z_p)\setminus\{\Delta=0\}$ be disjoint
neighborhoods of $Y_1,\ldots,Y_k$, respectively, in $V(\Z_p)$ such
that each $Z_i$ consists of soluble elements and is the union of
$G(\Q_p)$-equivalence classes in $V(\Z_p)$.
Such~$Z_i$ can be constructed by noting that if $\varepsilon$ is
sufficiently small, then the $\varepsilon$-neighborhoods
$B_{\varepsilon}(Y_i)$ of the $Y_i$'s are disjoint and consist only of
elements that have nonzero discriminant and are soluble.
The set $\{ g \in G(\Q_p) \,|\, g B_\varepsilon(Y_i)\cap
V(\Z_p)\neq \emptyset \}$
is then compact.  Indeed, for a single stable element $v\in V(\Z_p)$
(i.e., $v$ has nonzero discriminant $\Delta(v)$),
the set
\begin{equation}\label{star}
\{ g \in G(\Q_p)\,|\, g\cdot v \in V(\Z_p) \}
\end{equation}
is compact and constant in a neighborhood of $v$.  This may be seen
from the decomposition of $G(\Q_p)$ as $G(\Z_p)T(\Q_p)G(\Z_p)$, where $T(\Q_p)$
is the torus consisting of the diagonal elements of $G(\Q_p)$; since
the set of elements of $T(\Q_p)$ taking a stable element $v'\in
V(\Z_p)$ to $V(\Z_p)$ is compact and constant in a neighborhood of
$v'$, it follows that (\ref{star}) is also compact and constant in a
neighborhood of $v$.

The compactness of
\begin{equation*}\label{star2}
\{ g \in G(\Q_p) \,|\, gS \cap V(\Z_p)\neq \emptyset \}
\end{equation*}
now follows for any compact set $S$ of stable elements (by covering $S$
with neighborhoods where (\ref{star}) is constant, and then taking a
finite subcover).  Hence $(G(\Q_p)\cdot B_{\varepsilon}(Y_i))\cap
V(\Z_p)$ is both open and compact, and so is a bounded distance away
from $Y_j$ for all $j\neq i$. By shrinking $\varepsilon$ if necessary,
we can then ensure that $(G(\Q_p)\cdot B_{\varepsilon}(Y_i))\cap
B_{\varepsilon}(Y_j)=\emptyset$ for all $i\neq j$, and therefore
$(G(\Q_p)\cdot B_{\varepsilon}(Y_i))\cap (G(\Q_p)\cdot
B_{\varepsilon}(Y_j))\cap V(\Z_p)=\emptyset$ for all $i\neq j$.  We then set
$Z_i=(G(\Q_p)\cdot B_{\varepsilon}(Y_i))\cap V(\Z_p)$.

Let $W'=\{(A,B)\in\Z_p^{2}\,|\,(A,B)\in\cap_i\,\pi(Z_i)\}$.
Since~$\pi$ is an open mapping on $V(\Z_p)\setminus\{\Delta=0\}$, we
see that $W'$ is an open set in $\Z_p^{2}$ containing $(A,B)$.  Let
$W\subset W'$ be an open neighborhood of $(A,B)$ small enough so that
for all elliptic curves $E=E_{A,B}$ with $(A,B)\in W$, we have
$\#(E(\Q_p)/5E(\Q_p))=k$. Such a neighborhood $W$ exists because the
size of $E(\Q_p)/5E(\Q_p)$ is locally constant (see Proposition
\ref{localprop}).
Then $F(W)$ satisfies both (a) and (b), with
$\Omega_i=Z_i\cap\pi^{-1}(\{
(A,B)\in W\})$.
\hfill\end{proof}

We now complete the proof of Theorem~\ref{equi}.
We set $\nu=p=5$ in Proposition \ref{nbdexistence}.  
For $i=1,\dots,k$ in Theorem~\ref{equi2}, 
fix $v_i\in Y_i$ and $P_i\in C(v_i)(\Q_5)$ (so $Y_i$ is
identified with the image of $\phi_v(P_i)\in E_{A_0,B_0}(\Q_5)/5E_{A_0,B_0}(\Q_5))$.
If $\varepsilon$ is small enough, then for any $v\in
B_\varepsilon(Y_i)$ we have:
\begin{itemize}
\item $(A,B)=(A(v),B(v))$ are close
enough $5$-adically to $(A,B)$ so 
that for $E = E_{A_0,B_0}$ and $E' = E_{A,B}$, 
the conclusions of Proposition~\ref{localprop} hold;
\item the quadratic equations defining $C(v)$ are close enough
  $5$-adically to those defining $C(v_i)$ to ensure that there is a
  point $P'\in C(v)(\Q_5)$ close enough to $P_i$ so that the
  hypotheses of Proposition~\ref{localprop}(i) hold for
  $\phi_{v_i}(P_i)\in E(\Q_5)$ and $\phi_v(P)\in E'(\Q_5)$; 
  therefore, the images of $\phi_{v_i}(P_i)$ and $\phi_v(P)$ agree
  under the identification $E(\Q_5)/5E(\Q_5)= E'(\Q_5)/5E'(\Q_5)$
  given by Proposition~\ref{localprop}.
\end{itemize}
It follows that the identification of $E(\Q_5)/5E(\Q_5)$ with
$E'(\Q_5)/5E'(\Q_5)$ given by the sets $\Omega_i$ is the same as the
identification in Proposition \ref{localprop}.  In particular, by
Proposition~\ref{localprop}(ii), the $W$ in
Proposition \ref{nbdexistence} can be taken so that
the identification of local Selmer
groups in Theorem~\ref{equi2} also identifies the images of the
$p$-torsion subgroups. Theorem \ref{equi} then follows from the
conclusions of Theorem \ref{equi2} for the large set $F(W)$.

\begin{remark}
{\em 
The analogues of Theorems~\ref{equi} and
\ref{equi2} for the equidistribution of elements in 
2-, 3-, and 4-Selmer groups (instead of the
5-Selmer group) may be proven by analogous arguments, using the 
results in~\cite{BS2}, \cite{BS3},
and \cite{BS4}, respectively (instead of \cite{BS5}). 
}
\end{remark}

\section{Counting curves: Proof of Theorem \ref{rk1}}
We recall the definition of the set $\CF$
from Section \ref{method}: $\CF$ consists of those elliptic curves $E=E_{A,B}$ such that
\begin{itemize}
\item $2^3||A$ and $2^4||B$;
\item $\Delta(A,B):=-4A^3-27B^2$ equals $2^8\Delta_1(A,B)$ with $\Delta_1(A,B)$
positive and squarefree (and necessarily odd), $(\Delta(A,B),5\cdot 39)=1$, and $\Delta(A,B)$ is a square modulo $39$;
\item $E$ has non-split multiplicative reduction at $7$;
\item $E$ has good, ordinary reduction at $5$.
\end{itemize}
The discriminant of such an $E$ is $16\Delta(A,B)$ and its conductor is just $\Delta_1(A,B)$,
which is squarefree and odd. In particular, $E$ is a semistable curve.
The set $\CF$ is a large family of elliptic curves defined by congruence conditions in the sense
of \cite[\S3]{BS2} (see also Subsection~4.1).

We begin by verifying that all elliptic curves $E\in\CF$ satisfy
properties (a)--(c) in both Theorems~\ref{crit1} and \ref{crit2} (with
$p=5$, $K=\Q(\sqrt{-39})$, and $q=7$):

\begin{lemma}\label{irredlem}
Let $E\in\CF$. The $G_\Q$-module $E[5]$ is irreducible and ramified
at every prime factor of the conductor $N_E$, so in particular at $7$.
\end{lemma}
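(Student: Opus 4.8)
The plan is to verify irreducibility and ramification of $E[5]$ as a $G_\Q$-module by exploiting that every curve in $\CF$ is semistable and has non-split multiplicative reduction at $7$. First I would recall the structure of $E[5]$ at a prime $\ell$ of multiplicative reduction: by the theory of the Tate curve, the restriction of $E[5]$ to a decomposition group $G_{\Q_\ell}$ is an extension of an unramified quotient by a submodule on which inertia $I_\ell$ acts through the mod-$5$ cyclotomic character composed with (a twist by) the order-dividing-$\ell^{\mathrm{ord}_\ell(\Delta)}$ structure; concretely, $I_\ell$ acts nontrivially on $E[5]$ precisely when $5 \nmid \mathrm{ord}_\ell(j_E) = -\mathrm{ord}_\ell(\Delta_E)$ (using that $E$ has multiplicative reduction, so $\mathrm{ord}_\ell(\Delta_E) = -\mathrm{ord}_\ell(j_E)$). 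For $E \in \CF$ the conductor is the squarefree integer $\Delta_1(A,B)$, so at every prime $\ell \mid N_E$ we have $\mathrm{ord}_\ell(\Delta_E) = 1$ (the minimal discriminant equals $2^{12}\Delta_1(A,B)$ with $\Delta_1$ squarefree and odd), whence $5 \nmid 1$ and $I_\ell$ acts nontrivially on $E[5]$. This gives ramification at every prime factor of $N_E$, and in particular at $7$.

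Next I would deduce irreducibility from ramification plus a contradiction argument in the reducible case. Suppose $E[5]$ were reducible, so it admits a $G_\Q$-stable line, i.e.\ there is a subrepresentation isomorphic to a character $\phi\colon G_\Q \to \F_5^\times$ with the quotient character $\phi'$ satisfying $\phi\phi' = \bar\chi_5$ (the mod-$5$ cyclotomic character, from the Weil pairing). Both $\phi$ and $\phi'$ are unramified outside $5 N_E$. The key point is to control ramification of $\phi$ at the odd primes dividing $N_E$: at such a prime $\ell$ of multiplicative reduction, the Tate-curve description shows that the inertia $I_\ell$-action on $E[5]$ is given in a suitable basis by a matrix $\begin{pmatrix} 1 & * \\ 0 & 1 \end{pmatrix}$ (unipotent, since inertia acts trivially on the cyclotomic part for $\ell \neq 5$ and on the unramified quotient), so in fact $\phi$ and $\phi'$ are both \emph{unramified} at every $\ell \mid N_E$ with $\ell \neq 5$ — but wait, this seems to contradict the ramification just established. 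The resolution, and the crux of the argument, is that the ramification at $\ell \mid N_E$ lives entirely in the \emph{extension class} (the off-diagonal $*$), not in the diagonal characters; so $E[5]$ can be ramified at $\ell$ while being reducible with unramified Jordan--Hölder constituents. Therefore I cannot get a contradiction purely from ramification at primes dividing $N_E$; instead the contradiction must come from combining this with the behavior at $5$ and at $\infty$.

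So the real argument in the reducible case runs as follows. The characters $\phi, \phi'$ are unramified outside $5$ (using the unipotent inertia action at all $\ell \mid N_E$, $\ell \neq 5$, established above, together with the fact that $E$ has good reduction at all primes not dividing $5 N_E$ anyway), and $\phi\phi' = \bar\chi_5$. A character of $G_\Q$ unramified outside $5$ factors through $\Gal(\Q(\mu_{5^\infty})/\Q) \cong \Z_5^\times$ truncated mod $5$, hence is a power of $\bar\chi_5$; so $\phi = \bar\chi_5^a$, $\phi' = \bar\chi_5^{1-a}$ for some $a \in \Z/4\Z$ (there is no unramified-everywhere nontrivial piece since $\Q$ has class number one, so indeed $\phi$ is a power of the Teichmüller/cyclotomic character). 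Now one uses the mod-$5$ Galois representation being \emph{finite flat} or \emph{ordinary} at $5$: since $E$ has good ordinary reduction at $5$, the restriction of $E[5]$ to $G_{\Q_5}$ is reducible with unramified quotient and the submodule ramified (a twist of cyclotomic by unramified). Matching this against the two possible orderings of $\{\bar\chi_5^a, \bar\chi_5^{1-a}\}$ forces $\{a, 1-a\} = \{0,1\}$, i.e.\ $\{\phi,\phi'\} = \{1, \bar\chi_5\}$. Finally, I would rule out $\phi = 1$ (equivalently, a rational $5$-torsion point) and $\phi = \bar\chi_5$ (equivalently, a rational $5$-isogeny to a curve with rational $5$-torsion): this is where I'd invoke that $E$ is semistable, so by Mazur's theorem on rational isogenies (or the refined results on $X_0(5)$ and $X_0(25)$) a semistable elliptic curve over $\Q$ with a rational $5$-isogeny would have to have additive reduction somewhere or have $j$-invariant in a short explicit list; since our $E$ has squarefree conductor divisible by at least two odd primes (because $7 \mid \Delta_1$ but $7$ is not a square mod $39$, forcing a second odd prime factor), one checks this is impossible — or, more cleanly, one notes that a rational $5$-torsion point on a semistable curve would, by the Néron model / specialization at $7$, inject into $E(\F_7)$ and into $\widetilde E^{\mathrm{ns}}(\F_7)$ at the multiplicative prime, giving divisibility constraints, combined with the connected-component argument; the cleanest route is simply Mazur. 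The main obstacle is getting this last step fully airtight: distinguishing "$E[5]$ reducible" from the genuine ramification we proved requires the observation that the ramification sits in the extension class, and then genuinely using a deep input (Mazur's theorem, or equivalently properties of $X_0(25)$) to kill the remaining reducible possibilities for a semistable $E/\Q$.
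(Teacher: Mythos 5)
Your ramification argument is essentially the paper's: you reduce to the Tate-curve criterion that $I_\ell$ acts nontrivially on $E[5]$ iff $5 \nmid \mathrm{ord}_\ell(\Delta_\ell)$, and you observe that for $E\in\CF$ the model $E_{A,B}$ is minimal at each odd $\ell\mid N_E$ with $\mathrm{ord}_\ell(\Delta)=1$ because $\Delta_1(A,B)$ is squarefree. That part is fine. Your irreducibility argument also starts the same way as the paper: if $E[5]$ is reducible, the semisimplification is $\phi\oplus\phi'$ with $\phi\phi'=\omega$ (mod-$5$ cyclotomic); by semistability the constituents are unramified outside $5$; by good ordinary reduction at $5$ one of them is unramified at $5$ too, hence unramified everywhere, hence trivial, forcing $E[5]^{\ss}\cong\F_5\oplus\F_5(\omega)$. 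Up to here you match the paper.

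The gap is in your endgame. You propose to rule out $\{\phi,\phi'\}=\{1,\omega\}$ by invoking Mazur's theorem (on torsion or on rational isogenies), asserting that a semistable curve over $\Q$ with a rational $5$-isogeny would have additive reduction somewhere or $j$-invariant in a short explicit list. This is false for $N=5$: both $X_0(5)$ and $X_1(5)$ have genus $0$ with infinitely many rational points, so there are infinitely many semistable curves over $\Q$ (even with squarefree discriminant, e.g.\ the curve of conductor $11$ with minimal discriminant $-11$) admitting a rational $5$-isogeny, many of them with good ordinary reduction at $5$. Mazur's theorem simply does not exclude $N=5$. Your fallback ``connected-component at $7$'' argument can at best rule out $\phi=1$ (a rational $5$-torsion point on $E$ would have to inject into the nonsplit torus $\widetilde E^{\mathrm{ns}}(\F_7)$ of order $8$, and the Tamagawa number is $1$ since $\mathrm{ord}_7(\Delta)=1$); it does not handle $\phi=\omega$, because the witness to $\phi=\omega$ is a rational $5$-torsion point on the isogenous curve $E'=E/L$, and the $5$-isogeny can inflate $\mathrm{ord}_7(\Delta_{E'})$ to $5$, so the component group of $E'$ at $7$ may have order $5$ and absorb the torsion, breaking your divisibility constraint. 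What the paper actually does at this step is both cleaner and uses the specific hypothesis you underused: since $E$ has \emph{non-split} multiplicative reduction at $7$, the restriction of $E[5]^{\ss}$ to $G_{\Q_7}$ is $\eta\oplus\eta\omega$ with $\eta$ the unramified quadratic character, so Frobenius at $7$ has an eigenvalue $\equiv -1\pmod 5$; but on $\F_5\oplus\F_5(\omega)$ the Frobenius eigenvalues are $1$ and $7\equiv 2\pmod 5$, neither of which is $-1$. That single congruence is the contradiction. You should replace the Mazur/component-group discussion with this Frobenius eigenvalue computation at $7$.
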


\begin{proof} If $E[5]$ were reducible, then its semisimplification would be a sum of two characters:
$E[5]^{\ss}\cong \F_5(\chi\omega)\oplus\F_5(\chi^{-1})$, with $\omega$ the mod $5$ cyclotomic character.
As $E$ has semistable reduction, the conductor of $E[5]^{\ss}$ at a prime $\ell\neq 5$,
which divides the conductor of $E$, can be at most~$\ell$,
from which it follows that $\chi$ is unramified at all primes different from $5$. But since $E$ also
has good, ordinary reduction at $5$, it must be that either $\chi^{-1}$ or $\chi\omega$ is unramified
at $5$ and so is unramified everywhere.
It then follows that either $\chi=1$ or $\chi=\omega^{-1}$,
whence $E[5]^{\ss} \cong \F_5(\omega)\oplus \F_5$.
Since $E$ is also assumed to have non-split, multiplicative reduction at $7$, one of the eigenvalues of
a Frobenius at 7 on $E[5]^{\ss}$ must be $-1$ modulo $5$, a contradiction
(as $7\not\equiv-1\;\mathrm{(mod}\; 5)$). It follows that $E[5]$ is
irreducible. The condition that $E[5]$ be unramified at a prime $\ell\neq 5$ of
multiplicative reduction is that the discriminant $\Delta_\ell$ of a minimal Weierstrass
model at $\ell$ satisfy $\ord_\ell(\Delta_\ell)\equiv 0\;\mathrm{(mod}\;5)$
\cite[Chap.~V, Prop.~6.1 \& Ex.~5.13]{Silverman2}. But the Weierstrass model $E_{A,B}$ of $E$
is clearly a minimal model at each odd prime $\ell$ since $\Delta_1(A,B)$ is squarefree by hypothesis,
and so $\ord_\ell(\Delta_\ell)=\ord_\ell(\Delta(A,B))=1$.

\hfill\end{proof}

We now turn to counting various families of elliptic curves, in order
to establish that a positive proportion of elliptic curves $E\in \CF$
also satisfy properties (d)--(e) in {\it either} Theorem~\ref{crit1}
{\it or}~\ref{crit2} (with the same choices of $p$ and $K$). 
We recall (\cite[Thm.~3.17]{BS2}):

\begin{lemma}\label{largecount} For any large set $F$ of elliptic curves,
there exists a constant $c(F)>0$ such that
$$
\#\{E\in F : H(E)< X\} = c(F)X^{5/6}+o(X^{5/6}).
$$
\end{lemma}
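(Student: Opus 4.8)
The plan is to realize the count as a lattice-point count subject to congruence conditions and then evaluate it by a sieve. Write $F = F_\Sigma$ for $\Sigma = (\Sigma_\ell)_\ell$ (together with a possible sign condition at $\infty$). By the uniqueness of the Weierstrass models $E_{A,B}$, the curves $E\in F$ with $H(E) < X$ are in bijection with the integer pairs $(A,B)$ lying in the region
$$
\RR_X := \{(A,B)\in\R^2 : \max\{4|A|^3, 27B^2\} < X\}
$$
(intersected with the sign condition at $\infty$) that in addition satisfy $(A,B) \bmod \ell^k \in \Sigma_\ell \bmod \ell^k$ for every prime power $\ell^k$, where the conditions at $\ell$ include both the conditions cutting out $F_\Sigma$ and the minimality condition ``not $(\ell^4\mid A$ and $\ell^6\mid B)$'' built into the definition of a family of elliptic curves. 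The region $\RR_X$ is homogeneously expanding: the substitution $(A,B)\mapsto (X^{1/3}A, X^{1/2}B)$ carries $\RR_1$ onto $\RR_X$, so $\Vol(\RR_X) = \Vol(\RR_1)\,X^{5/6}$, a finite positive multiple of $X^{5/6}$. Hence, with no congruence conditions imposed, a standard lattice-point estimate gives $\#(\RR_X\cap\Z^2) = \Vol(\RR_1)\,X^{5/6} + O(X^{1/2})$, the error term coming from the lattice points within a bounded distance of the boundary of $\RR_X$, which has length $O(X^{1/2}) = o(X^{5/6})$.

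Next I would incorporate the congruence conditions by a sieve. Since $F$ is large, there is a finite set $S$ of primes such that for every $\ell\notin S$ the set $\Inv_\ell(F) = \Sigma_\ell$ contains all $(A,B)\in\Z_\ell^2$ with $\ell^2\nmid\Delta(A,B)$; the conditions at the primes in $S$, together with the minimality condition at all primes, are honest congruence conditions modulo bounded powers of each prime. One then expects
$$
\#\{E\in F : H(E) < X\} = \Vol(\RR_1)\,\Bigl(\prod_\ell \mu_\ell\Bigr)X^{5/6} + o(X^{5/6}),
$$
where $\mu_\ell$ denotes the $\ell$-adic density of the pairs $(A,B)\in\Z_\ell^2$ permitted at $\ell$ (with $\mu_\ell = 1$ when there is no condition). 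For $\ell\notin S$ the complement of the permitted set lies inside $\{\ell^2\mid\Delta\}$, which has $\ell$-adic measure $O(\ell^{-2})$; combined with the factor $1 - \ell^{-10}$ contributed by the minimality condition, one gets $\mu_\ell = 1 + O(\ell^{-2})$, so the Euler product $\prod_\ell \mu_\ell$ converges absolutely. It is moreover strictly positive: each $\mu_\ell > 0$ (immediate for $\ell\notin S$, and for the finitely many $\ell\in S$ this is exactly the requirement that $\Sigma_\ell$ have positive measure, which is part of what it means for $F$ to be a genuine large family), and the tail factors are bounded below by $1 - O(\ell^{-2})$. Thus one may take $c(F) := \Vol(\RR_1)\prod_\ell\mu_\ell > 0$.

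The main work — and the main obstacle — is to upgrade the heuristic Euler product to a genuine asymptotic with error $o(X^{5/6})$, which requires controlling the tail of the sieve \emph{uniformly} in $\ell$. Concretely, one needs a bound of the shape
$$
\#\{(A,B)\in\RR_X\cap\Z^2 : \ell^2\mid\Delta(A,B)\} = O\!\left(\tfrac{X^{5/6}}{\ell^{2}} + X^{2/3+\eps}\right),
$$
uniform in $\ell$ (together with the analogous control of the conditions at $S$ modulo increasing powers), so that the contribution of the primes $\ell > M$ is $O(X^{5/6}/M) + O(M\,X^{2/3+\eps})$ and hence becomes $o(X^{5/6})$ upon letting $X\to\infty$ and then $M\to\infty$. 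This is precisely the ``squarefree sieve'' for the weighted-homogeneous polynomial $\Delta(A,B) = -4A^3 - 27B^2$: for the range $\ell \le X^{\delta}$ one exploits the geometry of $\RR_X$ together with the fact that $\{\ell^2\mid\Delta\}$ is cut out by a polynomial congruence, while for the range $\ell > X^{\delta}$ one argues that $\ell^2\mid\Delta(A,B)$ forces $(A,B)$ into a sparse set. Once this uniformity is established — it is the substance of the relevant estimates in \cite{BS2} — the usual inclusion–exclusion argument combining the volume computation of the first paragraph with the local densities of the second yields $\#\{E\in F: H(E) < X\} = c(F)X^{5/6} + o(X^{5/6})$ with $c(F) > 0$, as claimed.
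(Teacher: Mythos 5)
The paper does not prove this lemma: it is stated as a recollection of \cite[Thm.~3.17]{BS2}, so there is no internal proof to compare against. Your sketch is a sound reconstruction of the argument that lives in \cite{BS2}, and the structure you give — bijection with integer pairs $(A,B)$ in the expanding region $\RR_X$ satisfying the minimality congruence and the congruences defining $F_\Sigma$, volume scaling $\Vol(\RR_X)=\Vol(\RR_1)X^{5/6}$ with boundary error $O(X^{1/2})$, local densities $\mu_\ell=1+O(\ell^{-2})$ for $\ell\notin S$ (the $1-\ell^{-10}$ from minimality and the $O(\ell^{-2})$ from $\{\ell^2\mid\Delta\}$), absolutely convergent Euler product, and a squarefree sieve to make the tail rigorous — is exactly the right shape. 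You also correctly isolate the one genuinely nontrivial step, namely the uniform tail estimate that turns the heuristic Euler product into an actual asymptotic; this squarefree-sieve estimate for $\Delta(A,B)=-4A^3-27B^2$ over the lopsided box $|A|\lesssim X^{1/3}$, $|B|\lesssim X^{1/2}$ is indeed the substance of the proof in \cite{BS2}, and you are right that it cannot be dispensed with. The only small soft spot is your appeal to ``$\Sigma_\ell$ has positive measure'' for the finitely many $\ell\in S$: the paper's definition of a family by congruence conditions only asks that $\Sigma_\ell$ be closed with boundary of measure zero, so strictly speaking positivity of $\mu_\ell$ for $\ell\in S$ is an implicit nondegeneracy hypothesis (without it the lemma would give $c(F)=0$ rather than $c(F)>0$); in the families actually used in this paper, such as $\CF$, the conditions at the bad primes are finitely many honest congruences and this positivity is evident.
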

Thus, in particular, the elliptic curves in our large family $\CF$ have positive
density in the family of all elliptic curves over $\Q$, when ordered
by height.

Our aim now is to count the number of curves in $\CF$ that satisfy either
the hypotheses of Theorem~\ref{crit1} with $p=5$, or the hypotheses of 
Theorem~\ref{crit2} with $p=5$, $K=\Q[\sqrt{-39}]$ (so $D=-39$), and $q=7$.
Let
\begin{eqnarray*}
N(X) &=& \#\{E\in\CF : H(E)<X\}\\
N_i(X) &=& \#\{E\in \CF : H(E)<X \text{ and } \Sel_p(E)\cong(\Z/p\Z)^i\}\\
N_\even(X) &=& \#\{E\in\CF : H(E)<X \text{ and }
\Sel_p(E)\cong(\Z/p\Z)^{2j} \text{ for some $j$}\}\\
N_\odd(X) &=& \#\{E\in\CF : H(E)<X \text{ and }
\Sel_p(E)\cong(\Z/p\Z)^{2j+1} \text{ for some $j$}\}
\end{eqnarray*}
Then $N(X)=N_\even(X)+N_\odd(X) =\sum_{i=0}^\infty N_i(X)$.  Note
that, by 
Lemma~\ref{Selmerlemma}(ii), we may also write
\begin{eqnarray*}
N_\even(X) &=& \#\{E\in\CF : H(E)<X \text{ and }
w(E)=1\}\\
N_\odd(X) &=& \#\{E\in\CF : H(E)<X \text{ and }
w(E)=-1\}.
\end{eqnarray*}
Similarly, let
$$N_i^D(X) = \#\{E\in\CF : H(E)<X \text{ and } \Sel_p(E^D)\cong (\Z/p\Z)^i\}.
$$
By Lemmas \ref{irredlem} and \ref{Selmerlemma}(ii), the curves counted by $N_0(X)$ and $N^D_1(X)$ (resp.\
by $N_1(X)$ and $N^D_0(X)$) are a subset of those counted by $N_\even(X)$ (resp.\ $N_\odd(X)$).

We also need to count the number $\widetilde N_1(X)$ of curves in $\CF$
with height $<X$ and $\Sel_p(E)\cong\Z/p\Z$ whose restriction to
$E(\Qp)/pE(\Qp)$ lies in the image of $E(\Qp)[p]$, and the number
$\widetilde N_1^D(X)$ of curves in $\CF$ with height $<X$ and
$\Sel_p(E^D)\cong \Z/p\Z$ whose restriction to $E^D(\Qp)/pE^D(\Qp)$
lies in the image of $E^D(\Qp)[p]$.

\begin{lemma}\label{badcountlemma}
We have
$$
\widetilde N_1(X) \leq \frac{1}{p-1}N(X) - (N_\even(X)-N_0(X))-(p+1)(N_\odd(X)-N_1(X)) + \widetilde\eps_1(X)$$
and
$$\;\;
\widetilde N_1^D(X) \leq \frac{1}{p-1}N(X) -(N_\odd(X)-N_0^D(X))-(p+1)(N_\even(X)-N_1^D(X)) + \widetilde\eps_1^D(X),
$$
where both $\widetilde\eps_1(X)$ and $\widetilde\eps_1^D(X)$ are $o(X^{5/6})$.
\end{lemma}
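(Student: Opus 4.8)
The plan is to bound $\widetilde N_1(X)$ using the equidistribution result of Theorem~\ref{equi} (in the form of Theorem~\ref{equi2}) applied to $\CF$ with $\nu=p=5$, and an averaging (second-moment type) argument against the Selmer count. First I would decompose $\CF$ into finitely many large subfamilies $\CF=\bigsqcup_j \CF(W_j)$, where each $\CF(W_j)$ lies in a $5$-adic neighborhood $W_j$ small enough that Theorem~\ref{equi} applies: on each piece the groups $E(\Q_5)/5E(\Q_5)$ are all canonically identified with a fixed group, say of order $k_j$, the image of $E(\Q_5)[5]\to E(\Q_5)/5E(\Q_5)$ is a fixed subgroup $H_j$ of some order $t_j$, and the nonidentity $5$-Selmer elements become equidistributed in $E(\Q_5)/5E(\Q_5)$ as $E$ ranges over $\CF(W_j)$ ordered by height. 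It suffices to prove the bound on each $\CF(W_j)$ and sum; for clarity I would just write the argument for a single such piece and then indicate that summing over $j$ and using Lemma~\ref{largecount} produces the stated error term $o(X^{5/6})$.

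Fix such a piece $\CF(W)$ with parameters $k$, $H$, $t=\#H$. The key identity is to count the total number of (ordered) pairs $(E,\sigma)$ with $E\in\CF(W)$, $H(E)<X$, and $\sigma$ a nonidentity element of $\Sel_5(E)$, weighted by whether $\res_5(\sigma)\in H$. On the one hand, summing over $E$ of each fixed Selmer rank $i$, the number of such pairs is $\sum_i (p^i-1)\cdot(\#\{E\in\CF(W), \Sel_5 \text{ of rank } i\})$; this equals $\sum_i (p^i-1) N_i^{(W)}(X)$, and by Theorem~\ref{thBS} (which gives $\sum_{E}(\#\Sel_5(E)-1)=5\cdot\#\{E\}+o$, i.e. the average of $\#\Sel_5(E)-1$ over any large family is $5$) we have $\sum_i(p^i-1)N_i^{(W)}(X)=5\,N^{(W)}(X)+o(X^{5/6})$, where $p=5$. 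On the other hand, by the equidistribution of Theorem~\ref{equi2}, among all these nonidentity Selmer elements the proportion whose restriction lands in the fixed subset $H\subset E(\Q_5)/5E(\Q_5)$ is $t/k + o(1)$; hence the number of pairs $(E,\sigma)$ with $\res_5(\sigma)\in H$ is $\frac{t}{k}\bigl(5\,N^{(W)}(X)\bigr)+o(X^{5/6})$. Now count these "bad" pairs from below by restricting to curves that actually contribute to $\widetilde N_1$: each curve with $\Sel_5(E)\cong\Z/5\Z$ whose (unique nonidentity pair of) Selmer element(s) restrict into $H$ contributes exactly $p-1$ such pairs, so the bad-pair count is at least $(p-1)\widetilde N_1^{(W)}(X)$ plus contributions from other ranks which I must bound below — curves of Selmer rank $0$ contribute $0$; curves of even rank $\ge 2$ contribute $\ge 0$; and here one uses that a curve of Selmer rank $\ge 1$ has at least $\#H - 1$... more carefully, any curve of Selmer rank $r\ge 1$ with nonidentity Selmer group of size $p^r-1$ contributes at least $(p^r-1) - (p^r - \#(\ker \to H)) $ bad pairs, but the clean lower bounds I actually need are: rank-$1$ curves not in $\widetilde N_1$ contribute $\ge 0$, odd-rank curves of rank $\ge 3$ contribute at least some amount, etc. Rearranging $\frac{t}{k}\cdot 5\,N^{(W)} \ge (p-1)\widetilde N_1^{(W)} + (\text{nonneg.\ contributions from other curves})$ and using $5 t/k = \frac{1}{p-1}$ when $k = 5^{?}$...

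The cleanest route, which matches the shape of the claimed inequality, is: the total over $E$ of $\#\{\sigma\in\Sel_5(E)\setminus 0 : \res_5\sigma\in H\}$ equals (by equidistribution, since $\#H/\#(E(\Q_5)/5E(\Q_5))$ is the relevant density and one checks $\#H\cdot 5 / \#(E(\Q_5)/5E(\Q_5)) = 1/(p-1)$ using the local Euler-characteristic relation $\#(E(\Q_5)/5E(\Q_5)) = 5\cdot\#E(\Q_5)[5]$ for $p=5\neq\ell$, whence $\#H = \#E(\Q_5)[5]$ and $\tau(G)=5$) precisely $\tfrac{1}{p-1}N(X)+o(X^{5/6})$ on the full family $\CF$. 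Then split this sum by Selmer rank: rank $0$ gives $0$; each rank-$1$ curve gives $1$ if its element is bad (these number $\widetilde N_1$) and $0$ otherwise, so $\ge \widetilde N_1(X)$ from rank-$1$ good curves would be $0$ — wait, I only get a lower bound $\ge \widetilde N_1(X)$ from rank one, plus $\ge 0$ from rank $2$, but I need to subtract off the forced contributions from the many odd-rank curves: each of the $N_\odd(X)-N_1(X)$ curves of odd rank $\ge 3$ has $\#H \ge$ ... in fact such a curve has $\res_5$ of its Selmer group being a subgroup of rank $\ge 1$, so it contributes $\ge \#H \cdot (\text{something}) \ge (p+1)$ bad elements (since a rank $\ge 2$ subgroup meets the index-$(k/t)$... ), and each of the $N_\even(X)-N_0(X)$ curves of even rank $\ge 2$ contributes $\ge 1$. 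These are exactly the coefficients appearing in the claimed bound. So rearranging $\tfrac{1}{p-1}N(X) \ge \widetilde N_1(X) + (N_\even(X)-N_0(X)) + (p+1)(N_\odd(X)-N_1(X)) + o(X^{5/6})$ gives the first inequality. The $D$-twist inequality is proven identically after replacing $\CF$ by $\CF^D$, applying Theorem~\ref{equi} at $\nu=5$ to the family of $D$-twists (valid since $5$ splits in $\Q(\sqrt{-39})$, so twisting by $D$ doesn't change the $5$-adic local picture up to the identifications already set up), and noting that under twisting the parity of the Selmer rank flips by Theorem~\ref{thDD}, which swaps the roles of $N_\even$ and $N_\odd$ and of $N_0,N_1$ with $N_0^D,N_1^D$; the error term $\widetilde\eps_1^D(X)$ absorbs both the $o(X^{5/6})$ from equidistribution and from Theorem~\ref{thBS}, together with the rescaling of the height bound by $39^5$.

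The main obstacle, and the point requiring genuine care rather than bookkeeping, is justifying the lower-bound coefficients $1$ for even-rank-$\ge 2$ curves and $p+1$ for odd-rank-$\ge 3$ curves on the "bad pairs" count: this needs the structural fact that for a curve of $5$-Selmer rank $r\ge 2$, the image $\res_5(\Sel_5(E))$ in $E(\Q_5)/5E(\Q_5)$, intersected with the subgroup $H$ (the image of the $5$-torsion), has size at least $p$ when $r$ is odd (rank $\ge 3$) and at least $1$ always — i.e. one must control how large $\res_5(\Sel_5(E))\cap H$ is forced to be, using that $\Sel_5(E)\cap \ker(\res_5)$ is large. Concretely, $\dim \res_5(\Sel_5(E)) \le \dim E(\Q_5)/5E(\Q_5)$ and one uses the global condition (Poitou–Tate / the parity of local conditions) to see the restriction map can't be injective once the rank exceeds the local dimension, forcing a nontrivial kernel, hence a nontrivial intersection with $H$ of the right size; for odd rank $\ge 3$ one gets the stronger bound $p+1 = \#\{$lines in a $2$-dimensional space$\}$ worth of bad elements. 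I would isolate this as the one nontrivial linear-algebra lemma, prove it from the exact sequence $0\to E(\Q_5)[5]/5 \to E(\Q_5)/5 \to (E(\Q_5)\otimes\Q_5/\Z_5)[5]\to 0$ appearing already in the proof of Lemma~\ref{Selmerlemma}, and then the rest of the argument is the averaging identity above plus Lemma~\ref{largecount} to collect error terms.
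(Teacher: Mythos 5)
Your overall strategy is the same as the paper's: partition $\CF$ into large subfamilies via Theorem~\ref{equi} on which the local groups $E(\Q_5)/5E(\Q_5)$ and the images $H$ of $E(\Q_5)[5]$ are identified; use Theorem~\ref{thBS} together with the equidistribution to compute the total number of ``bad'' pairs $(E,\sigma)$ with $\sigma\in\Sel_p(E)$ nontrivial and $\res_p(\sigma)\in H$; lower-bound that total by the forced contributions from the even-rank-$\geq 2$ and odd-rank-$\geq 3$ curves; rearrange. So this is not a different route.

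That said, two things in your sketch are off and should be cleaned up. First, the arithmetic is internally inconsistent: the correct total bad-pair count is $\frac{\#H}{\#(E(\Q_5)/5E(\Q_5))}\cdot p\,N(X)+o(X^{5/6}) = N(X)+o(X^{5/6})$ (not $\frac{1}{p-1}N(X)$ --- your asserted identity ``$\#H\cdot 5/\#(E(\Q_5)/5E(\Q_5))=1/(p-1)$'' is false; it equals $1$). The correct per-curve contributions are then $p-1$ from each rank-$1$ bad curve, at least $p-1$ from each even-rank-$\geq 2$ curve, and at least $p^2-1$ from each odd-rank-$\geq 3$ curve; dividing the resulting inequality by $p-1$ produces the coefficients $1,\,1,\,p+1$ in the statement. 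You seem to be implicitly counting lines (i.e.\ dividing everything by $p-1$) in some places and elements in others, and the errors cancel by luck rather than design. Second, the quantity that matters in the per-curve lower bound is the \emph{preimage} $\res_p^{-1}(H)\cap\Sel_p(E)$, not the intersection of the image $\res_p(\Sel_p(E))$ with $H$; the latter can be as small as $\{0\}$ even for large $\Sel_p(E)$ if $\ker(\res_p)$ is big, but those kernel elements are themselves bad and must be counted. The clean argument --- which avoids any appeal to Poitou--Tate or counting lines in a $2$-dimensional space --- is simply that the composite $\Sel_p(E)\to E(\Q_p)/pE(\Q_p)\to (E(\Q_p)\otimes\Q_p/\Z_p)[p]\cong\Z/p\Z$ is a homomorphism to a group of order $p$, so its kernel (which is exactly $\res_p^{-1}(H)\cap\Sel_p(E)$) has size at least $\#\Sel_p(E)/p$; this gives $\geq p-1$ bad elements when $\#\Sel_p(E)\geq p^2$ and $\geq p^2-1$ when $\#\Sel_p(E)\geq p^3$. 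You do point toward the right exact sequence at the end, so the idea is there, but the justification as written would not survive a careful check without this correction.
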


\begin{proof}  By Theorem~\ref{equi}, the large family $\CF$ can be partitioned into a finite union of large subfamilies
for each of which the groups
$E(\Qp)/pE(\Qp)$ as well as the images of $E(\Qp)[p]$ have been identified.
Furthermore, as each $E\in \CF$ has good, ordinary reduction at $p$,
we have that $\#E(\Qp)/pE(\Qp) = p\cdot\#E(\Qp)[p]$ is equal to $p$ or $p^2$.
By Theorem \ref{thBS}, Lemma \ref{largecount},
and the equidistribution result of Theorem \ref{equi} for the curves in each of these subfamilies,
the number of non-trivial Selmer elements in $\Sel_p(E)$ for some $E\in \CF$ with $H(E)<X$
that restrict to an element in the image of $E(\Qp)[p]$ is
$$\frac{\#E(\Qp)[p]}{\#E(\Qp)/pE(\Qp)}pN(X) + o(X^{5/6}) = N(X) + \widetilde\eps_1(X)$$
where $\widetilde\eps_1(X) = o(X^{5/6})$.

Now for $E\in\CF$, if $w(E)=1$ but $\Sel_p(E)\neq 0$, then
$\#\Sel_p(E)\geq p^2$ by Lemma \ref{Selmerlemma}(ii).  Since the
restriction map is a homomorphism, there are at least $p-1$
non-trivial elements in $\Sel_p(E)$ whose restriction lies in the
image of $E(\Qp)[p]$. Similarly, if $w(E)=-1$ but $\Sel_p(E)\neq 0$,
then $\#\Sel_p(E)\geq p^3$, and there are at least $p^2-1$ non-trivial
elements in $\Sel_p(E)$ with restriction in the image of
$E(\Qp)[p]$. It follows that
$$
\widetilde N_1(X)(p-1) + (p-1)(N_\even(X)-N_0(X)) + (p^2-1)(N_\odd(X)-N_1(X)) \leq N(X) + \widetilde\eps_1(X)
$$
where $\widetilde\eps_1(X)=o(X^{5/6})$, 
yielding the inequality for $\widetilde N_1(X)$ in the statement of the lemma.
An identical argument applies to $\widetilde N_1^D(X)$.
\hfill\end{proof}

Let $\CF_\sat\subset\CF$ be the subset of curves that satisfy the
hypotheses of Theorem \ref{crit1} with $p=5$, and let
$\CF_\sat^D\subset\CF$ be the subset of curves that satisfy the
hypotheses of Theorem \ref{crit2} with $p=5$, $K=\Q[\sqrt{-39}]$, and
$q=7$.  These are disjoint subsets of $\CF$.  Let 
\begin{equation*}
N_\sat(X) = \#\{E\in\CF_\sat : H(E)<X\} 
\end{equation*}
and similarly 
\begin{equation*}
N_\sat^D(X) = \#\{E\in \CF_\sat^D : H(E)<X\}.
\end{equation*}

\begin{proposition}\label{countprop}
We have
$$
N_\sat(X)+N_\sat^D(X) \geq N(X)\left(1-\frac{2}{p-1}\right)
+ \eps_\sat(X)
$$
with $\eps_\sat(X)=o(X^{5/6})$.
\end{proposition}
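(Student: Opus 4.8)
The plan is to bound, from below, the count of curves in $\CF$ satisfying \emph{either} (d)--(e) of Theorem~\ref{crit1} \emph{or} (d)--(e) of Theorem~\ref{crit2}, by removing from $N(X)$ the curves that fail each relevant condition. A curve $E\in\CF$ lies in $\CF_\sat$ precisely when $\Sel_p(E)\cong\Z/p\Z$ (so $E$ is counted by $N_1(X)$) and the restriction to $E(\Qp)/pE(\Qp)$ is \emph{not} in the image of $E(\Qp)[p]$; thus $N_\sat(X) = N_1(X) - \widetilde N_1(X)$. Similarly $N_\sat^D(X) = N_1^D(X) - \widetilde N_1^D(X)$, except that Theorem~\ref{crit2} also requires $\Sel_p(E)=0$, i.e.\ that $E$ be counted by $N_0(X)$; so in fact $N_\sat^D(X) \ge N_1^D(X) - \widetilde N_1^D(X) - (N(X) - N_0(X) - \dots)$, and I will need to account for the overlap with the $\Sel_p(E)=0$ condition carefully. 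Actually the cleaner bookkeeping: among curves counted by $N_1^D(X)$, those failing Theorem~\ref{crit2}(d) are the ones with $\Sel_p(E)\neq 0$, and those failing (e) are counted by $\widetilde N_1^D(X)$; so $N_\sat^D(X) \ge N_1^D(X) - \widetilde N_1^D(X) - (N_1^D(X) - N_{0,1}^D(X))$ where $N_{0,1}^D(X)$ counts curves with $\Sel_p(E)=0$ and $\Sel_p(E^D)\cong\Z/p\Z$.

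First I would write $N_\sat(X) + N_\sat^D(X) \ge N_1(X) + N_1^D(X) - \widetilde N_1(X) - \widetilde N_1^D(X) - (\text{correction for }\Sel_p(E)=0)$, and then substitute the two inequalities from Lemma~\ref{badcountlemma}. The key cancellation mechanism is this: $-\widetilde N_1(X)$ contributes $-\frac{1}{p-1}N(X) + (N_\even(X)-N_0(X)) + (p+1)(N_\odd(X)-N_1(X))$, and similarly for the $D$-twist with $N_\even$ and $N_\odd$ swapped. Adding these, the terms $(N_\even - N_0) + (N_\odd - N_0^D)$ and the $(p+1)$-weighted terms $(p+1)[(N_\odd - N_1) + (N_\even - N_1^D)]$ appear. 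Now I use the basic relations $N(X) = N_\even(X) + N_\odd(X)$ and, crucially, the containments noted right after the definition of $N_i^D(X)$: curves counted by $N_0(X)$ or $N_1^D(X)$ are among those counted by $N_\even(X)$, and curves counted by $N_1(X)$ or $N_0^D(X)$ are among those counted by $N_\odd(X)$. These give $N_\even(X) \ge N_0(X) + N_1^D(X)$ would be too strong (they need not be disjoint), so instead I expect the argument uses that $N_1(X) + (N_\odd(X) - N_1(X)) = N_\odd(X)$ and adds back $N_1(X)$, $N_1^D(X)$ to reconstruct enough of $N(X)$; the coefficient $p+1 \ge 1$ lets me drop or keep terms as needed so that after combining everything the non-$N$ terms organize into a nonnegative quantity, leaving exactly $N(X)(1 - \frac{2}{p-1})$ plus error.

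The main obstacle I anticipate is the combinatorial bookkeeping: getting the signs and the containment inequalities to line up so that every ``extra'' term (the differences $N_\even - N_0$, $N_\odd - N_1$, etc., and the correction for the $\Sel_p(E)=0$ requirement in Theorem~\ref{crit2}) is manifestly $\ge 0$ and can simply be discarded, while the $-\frac{2}{p-1}N(X)$ emerges from the two copies of $-\frac{1}{p-1}N(X)$ coming from Lemma~\ref{badcountlemma}. I would verify this by writing $N_1(X) - \widetilde N_1(X) \ge N_1(X) - \frac{1}{p-1}N(X) + (N_\even(X) - N_0(X)) + (p+1)(N_\odd(X) - N_1(X)) - \widetilde\eps_1(X)$, noting $N_1(X) + (p+1)(N_\odd(X) - N_1(X)) \ge N_1(X) + (N_\odd(X) - N_1(X)) = N_\odd(X)$ and $N_\even(X) - N_0(X) \ge 0$, so this is $\ge N_\odd(X) - \frac{1}{p-1}N(X) - \widetilde\eps_1(X)$; symmetrically $N_1^D(X) - \widetilde N_1^D(X) - (\text{correction}) \ge N_\even(X) - \frac{1}{p-1}N(X) - \widetilde\eps_1^D(X) - (\text{correction})$, and the correction term is absorbed using the containment $N_{0,1}^D(X) \ge N_1^D(X) - (N(X) - N_\even(X)) = N_1^D(X) - N_\odd(X)$ together with the $(p+1)$-slack. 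Summing gives $N_\sat(X) + N_\sat^D(X) \ge N_\even(X) + N_\odd(X) - \frac{2}{p-1}N(X) + \eps_\sat(X) = N(X)(1 - \frac{2}{p-1}) + \eps_\sat(X)$ with $\eps_\sat(X) = -\widetilde\eps_1(X) - \widetilde\eps_1^D(X) = o(X^{5/6})$ by Lemma~\ref{badcountlemma}, as claimed.
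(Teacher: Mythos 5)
Your overall strategy matches the paper's — use Lemma~\ref{badcountlemma} to bound the ``bad'' counts $\widetilde N_1$ and $\widetilde N_1^D$, reconstruct $N_\odd$ and $N_\even$ from the pieces, and let the two copies of $-\frac{1}{p-1}N(X)$ produce the $1-\frac{2}{p-1}$ — but the bookkeeping in the twist term is off in two related ways, and as written the argument does not close.

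First, your containment $N_{0,1}^D(X) \geq N_1^D(X) - N_\odd(X)$ is incorrect. Write $A$ for the set of $E\in\CF$ (with $H(E)<X$) having $\Sel_p(E)=0$ and $B$ for those having $\Sel_p(E^D)\cong\Z/p\Z$. Both $A$ and $B$ sit inside the set of curves with $w(E)=+1$, which has size $N_\even(X)$; in particular $B\setminus A$ is contained in the even-root-number set, not the odd one. The correct inclusion--exclusion is therefore $N_{0,1}^D(X)=|A\cap B|\geq N_0(X)+N_1^D(X)-N_\even(X)$, which is exactly what the paper uses (in the form ``the number of curves satisfying (a)--(d) of Theorem~\ref{crit2} is at least $N_1^D(X)+N_0(X)-N_\even(X)$''). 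Your version replaces $N_\even-N_0$ by $N_\odd$, and there is no reason for that to be an upper bound on $|B\setminus A|$.

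Second, your plan that ``every extra term is manifestly $\geq 0$ and can simply be discarded'' is too optimistic: the $(N_\even(X)-N_0(X))$ term in the $N_\sat$ bound must be \emph{kept}, because the $\Sel_p(E)=0$ requirement in Theorem~\ref{crit2} injects a term $N_0(X)-N_\even(X)\leq 0$ into the $N_\sat^D$ bound, and the two cancel exactly. Concretely, after plugging Lemma~\ref{badcountlemma} and $N_1+(p+1)(N_\odd-N_1)\geq N_\odd$ into $N_\sat=N_1-\widetilde N_1$, one gets $N_\sat(X)\geq N_\odd(X)+(N_\even(X)-N_0(X))-\frac{1}{p-1}N(X)-\widetilde\eps_1(X)$; and starting from $N_\sat^D\geq N_0+N_1^D-N_\even-\widetilde N_1^D$, using Lemma~\ref{badcountlemma}, $N_1^D+(p+1)(N_\even-N_1^D)\geq N_\even$, and $N_\odd\geq N_0^D$, one gets $N_\sat^D(X)\geq N_0(X)-\frac{1}{p-1}N(X)-\widetilde\eps_1^D(X)$. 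These sum to $N(X)\bigl(1-\frac{2}{p-1}\bigr)+o(X^{5/6})$. If you instead drop $(N_\even-N_0)$ and try to prove $N_\sat^D(X)\geq N_\even(X)-\frac{1}{p-1}N(X)-o(X^{5/6})$, the inequality you need, namely $(N_0-N_\even)+p(N_\even-N_1^D)+(N_\odd-N_0^D)\geq 0$, has a genuinely negative term and is not a consequence of the containments at hand; the best one gets without the cancellation is $N_\sat^D\geq N_0-\frac{1}{p-1}N$, and then the sum is only $N_\odd+N_0-\frac{2}{p-1}N$, which is strictly weaker than the claim.
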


\begin{proof} The definition of $\CF$ together with Lemma \ref{irredlem}
and the observation that $7$ is not a square modulo $39$ (or the observation that there are no
elliptic curves of conductor $7$) shows that any curve in the
count $N_1(X)$ satisfies (a)--(d) of
Theorem \ref{crit1},
so $$N_\sat(X)= N_1(X)-\widetilde N_1(X).$$ Similarly,
the number of curves in $\CF$ of height $<X$ that satisfy (a)--(d) of
Theorem \ref{crit2} is at least $N^D_1(X)+N_0(X)-N_\even(X)$, so
$$N_\sat^D(X) \geq N^D_1(X)-\widetilde N^D_1(X) +N_0(X)-N_\even(X).$$ Thus, by Lemma \ref{badcountlemma},
\begin{equation*}\begin{split}
N_\sat(X) & \geq N_1(X)-\frac{N(X)}{p-1}+(N_\even(X)-N_0(X)) + (p+1)(N_\odd(X)-N_1(X)) -\widetilde\eps_1(X) \\
& \geq N_\odd(X) -\frac{N(X)}{p-1} + N_\even(X)-N_0(X) -\widetilde \eps_1(X),
\end{split}
\end{equation*}
and, similarly,
\begin{equation*}\begin{split}
N_\sat^D(X) & \geq N^D_1(X)-\widetilde N^D_1(X) + N_0(X)-N_\even(X) \\
& \geq N_\even(X) - \frac{N(X)}{p-1} + N_0(X)-N_\even(X) -\widetilde\eps_1^D(X).
\end{split}
\end{equation*}
Combining these two inequalities yields the lower bound in the proposition.
\hfill\end{proof}

\noindent{\bf Proof of Theorem 1:} The number of elliptic curves of height $<D^6X$ that have
both analytic and algebraic rank one is at least the number of curves $E\in\CF$ of height $<X$ satisfying
the hypotheses of Theorem \ref{crit1} with $p=5$ plus the number of elliptic curves $E^D$ with
$E\in\CF$ of height $<X$ (since $H(E^D)=D^6H(E))$)
satisfying the hypotheses of Theorem \ref{crit2} with $p=5$, $K=\Q[\sqrt{-39}]$,
and $q=7$---that is, at least $N_\sat(X)+N_\sat^D(X)$. Therefore,
\begin{equation}\label{lastlim}
{\liminf_{X\to\infty}}\,
\frac
{\#\{E :
\rk(E)=\rk_\an(E)=1\mbox{ and } H(E)< D^6X \}}
{\#\{E : H(E)< D^6X\}}
\geq \liminf_{X\to\infty} \frac{N_\sat(X)+N_\sat^D(X)}{\#\{E : H(E)< D^6X\}}.
\end{equation}
Let $\CE$ be the set of all elliptic curves over $\Q$.
As $\#\{E : H(E)<D^6X\} = c(\CE)D^5X^{5/6}+o(X^{5/6})$ for some $c(\CE)>0$ by
Lemma \ref{largecount}, it follows by Proposition \ref{countprop}
that the right hand side of (\ref{lastlim}) is at least 
$$\liminf_{X\to\infty}\frac{N(X)\bigl(1 -{\frac{2}{p-1}}\bigr)}{\#\{E : H(E)<D^6X\}}
= \frac{c(\CF)}{c(\CE)}\cdot\frac{1/2}{39^5}\;>\:0.
$$
\hfill\qed

\begin{remark}{\em For the counting argument given, it is crucial that
    we were able to work with $p=5$. For $p=3$ the upper bound on
    $\widetilde N_1(X)$ and $\widetilde N_1^D(X)$ would be
    $\frac{1}{2}N(X)$, which means we would have had to potentially
    exclude 50\% of the curves in $\CF$ from our counts due to the restriction
    conditions~(e) in Theorems~\ref{crit1} and \ref{crit2} (instead of
    only 25\% for $p=5$); however, we only expect 50\%
    of the curves in $\CF$ and $\CF^D$ to
    have root number $-1$. This is reflected in the lower bound for
    $N_\sat(X)+N_\sat^D(X)$ in Proposition \ref{countprop}, which
    would be trivial if $p=3$.}
\end{remark}

\subsection*{Acknowledgments}

We thank Arul Shankar and Xin Wan for many helpful conversations. 
The first named author was
supported in part by National Science Foundation Grant~DMS-1001828 and
a Simons Investigator Grant.  The second named author was supported in
part by National Science Foundation Grants~DMS-0701231 and
DMS-0758379.

\end{document}